\newtheorem{theorem}{Theorem}
\newtheorem{lemma}{Lemma}
\newtheorem{definition}{Definition}
\newtheorem{conjecture}{Conjecture}
\begin{document}

\title{On the inverse power index problem}

\author{Sascha Kurz}
\address{Sascha Kurz\\Department for Mathematics, Physics and Informatics\\University Bayreuth\\Germany}
\email{sascha.kurz@uni-bayreuth.de}

\keywords{weighted voting game, simple game, complete simple game, Banzhaf index, Shapley-Shubik index, inverse problem}
\subjclass[2000]{Primary: 91B12; Secondary: 90B99}

\maketitle

\begin{abstract}
  Weighted voting games are frequently used in decision making. Each voter
  has a weight and a proposal is accepted if the weight sum of the
  supporting voters exceeds a quota. One line of research is the efficient
  computation of so-called power indices measuring the influence of a
  voter. We treat the inverse problem: Given an influence vector and a
  power index, determine a weighted voting game such that the distribution
  of influence among the voters is as close as possible to the given target
  value. We present exact algorithms and computational results for the
  Shapley-Shubik and the (normalized) Banzhaf power index.
\end{abstract}

\section{Introduction}

The European Economic Community, i.~e.{} the predecessor of the European Union, was founded in 1957 by
Belgium, France, Germany, Italy, Luxembourg and the Netherlands. Decisions were and are made via polling.
More precisely, each country has a given voting weight, see Table~\ref{table_voting_weights_EEC}\footnote{Recent
estimates for the population are taken from http://epp.eurostat.ec.europa.eu.}, and a proposal is approved if it
is supported by at least $12$, called the quota, out of the 17~votes. So Luxembourg possesses $\frac{1}{17}$ of all
votes. But let us consider the following thought experiment: The five countries except Luxembourg vote for their own
without asking for Luxembourg's opinion. In any case the number $s$ of supporting votes is even. Thus the vote of
Luxembourg does not matter at all. For $s\ge 12$ the proposal will be approved and for $s\le 10$ the proposal will
be rejected. We may say that Luxembourg has absolutely no power in this weighted voting game.

\begin{table}[htp]
  \begin{center}
    \begin{tabular}{rrr}
      \hline
      state           & votes & population (2010)\\
      \hline
      Germany         & 4 & 82144902 \\
      France          & 4 & 62582650 \\
      Italy           & 4 & 60017346 \\
      the Netherlands & 2 & 16503473 \\
      Belgium         & 2 & 10783738 \\
      Luxembourg      & 1 &   494153 \\
      \hline\\
    \end{tabular}
    \caption{Voting weights and the quota in the European Economic Community of 1957.}
    \label{table_voting_weights_EEC}
  \end{center}
\end{table}

To have a measurement of the relative power of the members of voting schemes in more complex situations, power
indices like the Shapley-Shubik or the Banzhaf power index were invented, see the next section for a definition.
Table~\ref{table_power_indices} lists the values of the two mentioned power indices for three different weighted
voting games. We remark that there is a non-linear dependency between weights, quota and the value of the considered
power index.

\begin{table}[htp]
  \begin{center}
    \begin{tabular}{rrrrrrrrrr}
      \hline
      state           & $w_i$ & $\mathcal{SS}$ & $\mathcal{BZ}$ & $w_i$ & $\mathcal{SS}$ & $\mathcal{BZ}$ &
      $w_i$ & $\mathcal{SS}$ & $\mathcal{BZ}$\\
      \hline
      Germany         &  4 & 0.233 & 0.238 & 4 & 0.233 & 0.229 & 4 & 0.233 & 0.222 \\[1mm]
      France          &  4 & 0.233 & 0.238 & 4 & 0.233 & 0.229 & 4 & 0.233 & 0.222 \\[1mm]
      Italy           &  4 & 0.233 & 0.238 & 4 & 0.233 & 0.229 & 4 & 0.233 & 0.222 \\[1mm]
      Netherlands     &  2 & 0.150 & 0.143 & 3 & 0.200 & 0.188 & 2 & 0.100 & 0.111 \\[1mm]
      Belgium         &  2 & 0.150 & 0.143 & 2 & 0.050 & 0.063 & 2 & 0.100 & 0.111 \\[1mm]
      Luxembourg      &  1 & 0.000 & 0.000 & 1 & 0.050 & 0.063 & 1 & 0.100 & 0.111 \\[1mm]
      \hline
      quota           & 12 &       &       &12 &       &       &11 &       & \\
      \hline\\
    \end{tabular}
    \caption{Power indices for different weighted voting games.}
    \label{table_power_indices}
  \end{center}
\end{table}

If we suppose that there exists some agreement on a fair distribution of power $\left(d_i\right)_{1\le i\le n}$
among the $n$~countries and a selection of an appropriate power index $\mathcal{P}$, we obtain the inverse power
index problem\footnote{In \cite{keijzer2} the authors call this problem the \textit{voting game design problem}.}:
$\min \Vert d-\mathcal{P}(\chi)\Vert$, where $\chi$ is a weighted voting game for $n$~voters and $\Vert\cdot\Vert$
an arbitrary suitable norm to measure the deviation, i.~e.{} a rule to decide which assignment of voting weights is
\textit{fairer}. In practice the intended power distribution $d$ may arise from the population numbers, either in the
variant ``One Person, One Vote'', i.~e.{} $d_i$ is proportional to the population of country $i$, or by using Penrose
square root law, i.~e.{} $d_i$ is proportional to the square root of the population of country~$i$ \cite{Penrose,Polen}.

\subsection{Related work}

There is a vast literature how to compute the Shapley-Shubik index and other power indices in various circumstances,
either exactly or approximatively. Few results are known about the inverse power index problem so far. Leech proposes
a certain kind of fixed point algorithm and reports that it works considerably well in practice, whenever the number of
voters $n$ is not too small \cite{Leech1,Leech2}, see also \cite{lastei}. The author argues that for large $n$ one may
assume that the power index smoothly depends on the voting weights and thus Brouwer's fixed point theorem can be applied
to \textit{prove} the convergence of this approach. Another heuristic is described in \cite{anytime}.

Besides using the finiteness of the set of weighted voting games, the first general bounds stating that some power
distributions can not be approximated by Banzhaf vectors too closely are given by Alon and Edelman \cite{pre05681536}.

Since for each number of voters $n$ there is only a finite set of weighted voting games one may simply solve the
inverse power index problem by looping over the whole set. The enumeration of weighted voting games dates back to
at least 1962 \cite{0105.12002}, where up to $6$~voters are treated. For $n=7,8$ voters we refer e.~g.{} to
\cite{0233.94016,0205.17805,0841.90134}. Bart de Keijzer presents a promising graded poset for weighted voting
games in his master thesis \cite{keijzer}, see also \cite{keijzer2}.\footnote{We would like to remark that the
counts for weighted voting games with $6\le n\le 8$ voters are wrongly stated in \cite{keijzer}, but the methods
should work.} $n=9$ voters  where successfully treated in \cite{minimum_representation,da_tautenhahn}.

A similar inverse problem was solved in \cite{0942.91501}.

\subsection{Our contribution}
We present the first exact algorithm, besides complete enumeration of weighted voting games, for the inverse power
index problem. We computationally show that for $n\le 15$ voters the Shapley-Shubik vector of every simple game, complete
simple game or weighted voting game has a $\Vert\cdot\Vert_1$-distance to $(0.75,0.25,0\dots)$ of at least $\frac{1}{3}$.
For the Banzhaf power index the (conjectured) bound for the worst possible approximation in $\Vert\cdot\Vert_1$-distance
is a bit more involved. But it seems that for $n\ge 2$ voters $(0.75,0.25,0\dots)$ is one desired power distribution
where the worst case is attained. We present computational results for $n\le 11$ for simple games and $n\le 16$ for
complete simple games and weighted voting games. For more realistic instances we consider subsets of the European Union
and determine the exact solutions for the inverse power index problem for small numbers of voters. Additionally we present
some results on the set of achievable power distributions for the Banzhaf and the Shapley Shubik power index for $n\le 9$ voters.

\subsection{Outline of the paper}
In Section~\ref{section_basics} we state the definitions of the three voting methods simple games, complete simple games
and weighted voting games. We further introduce the Banzhaf and the Shapley Shubik power index. The known a priori estimates
are the topic of Section~\ref{section_a_priori}. We give precise conjectures about the, with respect to approximation via Banzhaf and Shapley Shubik vectors, worst case power distributions. In Section~\ref{section_ilp} we give an exact algorithm for the inverse power index problem for simple games, complete simple games and weighted voting games for the Banzhaf and the Shapley Shubik power index via an ILP formulation. Another exact approach via exhaustive enumeration is described in Section~\ref{section_exhaustive_enumeration}. A promising combination of both methods as a branch\&bound approach is outlined in Subsection~\ref{subsec_b_and_b}. Computational results evaluating the proposed exact algorithms for the inverse power index problem and proving the stated conjectures for small numbers of voters are given in Section~\ref{section_computational_results}. To get a more global understanding we consider the sets of achievable Banzhaf and Shapley Shubik vectors in Section~\ref{sec_hard_to_approximate} and close with a conclusion in Section~\ref{sec_hard_to_approximate}. Additionally we classify the complete set of desired power distributions which have the worst case approximation property for very small~$n$ in an appendix.

\section{Voting methods and power indices}
\label{section_basics}

Consider a yes-no voting system for a set $N=\{1,2,\dots,n\}$ of $n$ voters. The acceptance of a proposal depends on the subset of the supporting voters. In general this can be described by a Boolean function $\chi:2^N\rightarrow\{0,1\}$. A quite natural requirement for a voting system is monotonicity, i.~e.{} $\chi(U)\le\chi(U')$ for all $U\subseteq U'\subseteq N$. Monotone Boolean functions $\chi:2^N\rightarrow\{0,1\}$ with $\chi(\emptyset)=0$ and $\chi(N)=1$ are called simple games.

\begin{definition}
  For a simple game $\chi:2^N\rightarrow\{0,1\}$ a subset $U\subseteq N$ is called a minimal winning coalition if
  $\chi(U)=1$ and $\chi(U\backslash\{i\})=0$ for all $i\in U$. Analogously a subset $U\subseteq N$ is a maximal losing
  coalition if $\chi(U)=0$ and $\chi(U\cup\{i\})=1$ for all $i\in N\backslash U$.
\end{definition}

Each simple game is completely characterized by either the complete list $\overline{W}$ of its minimal winning coalitions or the complete list $\overline{L}$ of its maximal losing coalitions. Both the set of minimal winning coalitions and the set of the maximal losing coalitions are antichains. For each simple game $\chi$ there is a dual simple game $\chi^d$ defined via $\chi^d(U)=1-\chi(N\backslash U)$.
A voter $i\in N$ is called a null voter if he is not a member of at least one minimal winning coalition.

In many applications voting systems are restricted to smaller sub classes of simple games, e.~g.{} complete simple games arising from Isbell's desirability relation \cite{0083.14301}: We write $i\sqsupset j$ for two voters $i,j\in N$ iff we have $\chi\Big(\{i\}\cup U\backslash\{j\}\Big)\ge\chi (U)$ for all $j\in U\subseteq N\backslash\{i\}$. A pair $(N,\chi)$ is called complete simple game if it is a simple game and the binary relation $\sqsupset$ is a total preorder and we abbreviate $i\sqsupset j$, $j\sqsupset i$ by $i~\square~j$. To factor out symmetries we assume $1\sqsupset 2 \sqsupset \dots \sqsupset n$ and write coalitions $U\subseteq N$ as characteristic vectors $u$. We call $u$ a winning vector iff $\chi(U)=1$, otherwise we call $u$ a losing vector. 

\begin{figure}[htp]
  \begin{center}
    \setlength{\unitlength}{1cm}
    \begin{picture}(9,2.2)
      \put(0,1){$\begin{pmatrix}0\\0\\0\end{pmatrix}$}
      \put(1.4,1){$\begin{pmatrix}0\\0\\1\end{pmatrix}$}
      \put(2.8,1){$\begin{pmatrix}0\\1\\0\end{pmatrix}$}
      \put(4.2,1.8){$\begin{pmatrix}1\\0\\0\end{pmatrix}$}
      \put(4.2,0.2){$\begin{pmatrix}0\\1\\1\end{pmatrix}$}
      \put(5.6,1){$\begin{pmatrix}1\\0\\1\end{pmatrix}$}
      \put(7.0,1){$\begin{pmatrix}1\\1\\0\end{pmatrix}$}
      \put(8.4,1){$\begin{pmatrix}1\\1\\1\end{pmatrix}$}
      \put(0.9,1.1){\vector(1,0){0.45}}
      \put(2.3,1.1){\vector(1,0){0.45}}
      \put(3.7,1.2){\vector(1,1){0.5}}
      \put(3.7,1.0){\vector(1,-1){0.5}}
      \put(5.1,1.70){\vector(1,-1){0.5}}
      \put(5.1,0.50){\vector(1,+1){0.5}}
      \put(6.5,1.1){\vector(1,0){0.45}}
      \put(7.9,1.1){\vector(1,0){0.45}}
    \end{picture}
    \caption{The Hasse diagram for $\preceq$ on $N=\{1,2,3\}$.}
    \label{fig_hasse}
  \end{center}
\end{figure}
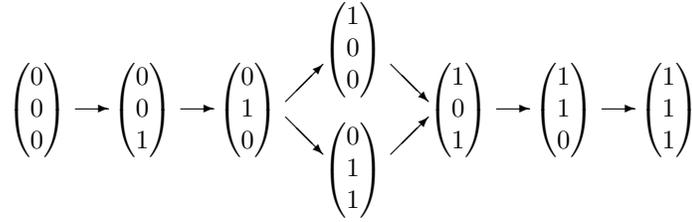

To state an analogue of minimal winning coalitions for complete simple games we need another partial ordering: For two coalitions $u=\left(\begin{array}{ccc}u_1&\dots&u_n\end{array}\right), v=\left(\begin{array}{ccc}v_1&\dots&v_n\end{array}\right)\in\{0,1\}^n$ we write $u\preceq v$ iff we have $\sum\limits_{i=1}^{k} u_i \le \sum\limits_{i=1}^{k} v_i$ for all $1\le k\le n$ and abbreviate $u\preceq v$, $u\neq v$ by $u\prec v$, $u \npreceq v$, $u \nsucceq v$ by $u\bowtie v$. With this we have $\chi(u)\le \chi(v)$ for all $u\preceq v$. By $\widehat{W}$ we denote the set of all shift-minimal winning vectors, i.~e.{} winning vectors which are minimal with respect to $\preceq$. Similarly we denote by $\widehat{L}$ the set of all shift-maximal losing vectors, i.~e.{} losing vectors which are maximal with respect to $\preceq$. We remark that each complete simple game is uniquely characterized by either $\widehat{W}$ or $\widehat{L}$, see e.~g.{} \cite{complete_simple_games}. In Figure~\ref{fig_hasse} we depict the Hasse diagram for $\preceq$ on $N=\{1,2,3\}$ and see that there are exactly $8$~complete simple games (or antichains) for $n=3$ voters; $7$ with one shift-minimal winning vector and one with two shift-minimal winning vectors.

An even more important sub class of complete simple games are weighted voting games given by non-negatives weights $w_i$ for all $i\in N$ and a quota $q$. The corresponding monotone Boolean function $\chi$ is obtained by $$\chi(U)=1\quad\quad\Longleftrightarrow\quad\quad \sum\limits_{i\in U}w_i\ge q.$$
Algorithmically we can check whether a given simple game~$\chi$ is a weighted voting game by solving a suitable linear program, see Section~\ref{section_exhaustive_enumeration}. 

\begin{table}[htp]
\begin{center}
\begin{tabular}{rrrrrrrrrr}
\hline
$\!\!\!\mathbf{n}\!\!\!$&$\!\!1\!\!$&$\!\!2\!\!$&$\!\!3\!\!$&$\!\!4\!\!$&$\!\!5\!\!$&$\!\!6\!\!$&
$\!\!\!7\!\!\!$&$\!\!\!8\!\!\!$&$\!\!\!9\!\!\!$\\
$\!\!\!\mathbf{\#}\text{\textbf{sg}}\!\!\!$&\!\!1\!\!&\!\!4\!\!&\!\!18\!\!&\!\!166\!\!&\!\!7579\!\!&\!\!7828352\!\!&\!\!2414682040996\!\!&\!\!$5.613043\cdot 10^{22}$\!\!
&\!\!$>10^{42}$\!\!\\
$\!\!\!\mathbf{\#}\text{\textbf{csg}}\!\!\!$&\!\!1\!\!&\!\!3\!\!&\!\!8\!\!&\!\!25\!\!&\!\!117\!\!&\!\!1171\!\!&
\!\!44313\!\!&\!\!16175188\!\!&\!\!284432730174\!\!\\
$\!\!\!\mathbf{\#}\text{\textbf{wvg{}}}\!\!\!$&\!\!1\!\!&\!\!3\!\!&\!\!8\!\!&\!\!25\!\!&\!\!117\!\!&\!\!1111\!\!&\!\!29373\!\!&\!\!2730164\!\!&\!\!989913344\!\!\\
\hline\\
\end{tabular}
\caption{Number of simple games, complete simple games and weighted voting games for $n$ voters.}
\label{table_numbers}
\end{center}
\end{table}

In Table~\ref{table_numbers} we state the known numbers of simple games, complete simple games and weighted voting games for $n$ voters, see \cite{csg,minimum_representation,da_tautenhahn,0736.06017}. To define two of the most important power indices for these three types of voting methods we need another definition:

\begin{definition}
  For a voter $i\in N$ and a monotone Boolean function $\chi$, an $i$-swing is a set $U\subseteq N\backslash\{i\}$ such
  that $\chi(U)=0$ and $\chi(U\cup\{i\})=1$.
\end{definition}

With this we can express the Banzhaf vector $(\mathcal{BZ}(\chi,1),\dots,\mathcal{BZ}(\chi,n))$ as $$\mathcal{BZ}(\chi,i)=\frac{1}{m}\cdot\left|\{U\subseteq N\backslash\{i\}\mid U\text{ is an $i$-swing}\}\right|,$$ where $m$ is the total number of swings. Also the Shapley-Shubik vector $(\mathcal{SS}(\chi,1),\dots,\mathcal{SS}(\chi,n))$ can be expressed using swings via $$\mathcal{SS}(\chi,i)=\frac{1}{n!}\cdot \sum\limits_{U\text{ is an $i$-swing }}
|U|!\cdot (n-|U|-1)!.$$

Since a set $U$ is an $i$-swing in a simple game $\chi$ if and only if it is an $i$-swing in the dual game $\chi^d$, both the Shapley-Shubik vector and the Banzhaf vector coincide for pairs of dual simple games. For both power indices the value for a null voter is zero. Removing a null voter from a simple game with $n$ voters gives a simple game with $n-1$ voters, where the power indices of the remaining $n-1$ voters coincide.

\section{A priori estimates}
\label{section_a_priori}

Since heuristics can only find good solutions but are not capable of providing lower bounds we are considering a priori estimates. The first fact coming to mind is the finite number of weighted voting games with $n$ voters. An upper bound of $2^{n^2-n+2}$ is given in \cite{keijzer2}. Respecting symmetry we conclude that for $n\ge 2$ there are at most $2^{n^2-n+2}\cdot n!<2^{n^2}\cdot n^n$ different Banzhaf- or Shapley-Shubik vectors in $[0,1]^n$. Decomposing $[0,1]^n$ into $2^{n^2}\cdot n^n$ subcubes of side lenght $\frac{1}{2^{n}n}$ results in at least one empty subcube using the pigeonhole principle. Thus there are points in $[0,1]^n$ such that the nearest Banzhaf- or Shapley-Shubik vector is at least $\frac{1}{2^{n+1}n}$ apart.

Having a closer look at the swings one can obtain a slightly sharper bound. For $n$ voters the number of swings lies between $n$ and $\left(\left\lfloor\frac{n}{2}\right\rfloor+1\right)\cdot{n\choose \left\lfloor\frac{n}{2}\right\rfloor+1}$, see \cite{0954.91019}. Using the fact that the numbers $\eta_i$ of swings for voter~$i$ all have the same parity we can conclude that either $\mathcal{BZ}(\chi,i)=\mathcal{BZ}(\chi,j)$ or we have
$$
  \left|\mathcal{BZ}(\chi,i)-\mathcal{BZ}(\chi,j)\right|\ge \frac{2}{\left(\left\lfloor\frac{n}{2}\right\rfloor+1\right)
  \cdot{n\choose \left\lfloor\frac{n}{2}\right\rfloor+1}}
$$
for voters $i$ and $j$.

Both bounds tend to zero when $n$ tends to infinity. Recently Alon and Edelman \cite{pre05681536} found a lower bound that does not tend to zero:
\begin{theorem}
  \label{thm_alon}
  Let $n>k$ be positive integers, let $\varepsilon<\frac{1}{k+1}$ be a positive real, and let $\chi$ be
  a simple game for $n$ voters. If $\sum\limits_{i=k+1}^nB(\chi,i)\le\varepsilon$, then there exists a simple game $\chi'$ for
  $k$ voters such that
  $$
    \Vert B(\chi)-B(\chi')\Vert_1=\sum_{i=1}^k \left|B(\chi,i)-B(\chi',i)\right|\,+\,\sum_{i=k+1}^n \left|B(\chi,i)-0\right|
    \le\frac{(2k+1)\varepsilon}{1-(k+1)\varepsilon}+\varepsilon.
  $$
\end{theorem}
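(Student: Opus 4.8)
The plan is to ``project out'' the tail voters $S=\{k+1,\dots,n\}$ and to produce a simple game $\chi'$ on the head $T=\{1,\dots,k\}$ whose normalized Banzhaf vector is close to $(B(\chi,1),\dots,B(\chi,k))$. The summand $\sum_{i=k+1}^n|B(\chi,i)-0|=\sum_{i=k+1}^nB(\chi,i)\le\varepsilon$ is free by hypothesis, so it suffices to find $\chi'$ with $\sum_{i=1}^k|B(\chi,i)-B(\chi',i)|\le\frac{(2k+1)\varepsilon}{1-(k+1)\varepsilon}$. The first step is to decompose swings according to the behaviour on the tail: for $A\subseteq S$ let $\chi_A$ be the monotone function on $2^T$ with $\chi_A(U)=\chi(U\cup A)$. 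Every $i$-swing $V$ of $\chi$ with $i\in T$ decomposes uniquely as $V=U\cup A$ with $A=V\cap S$ and $U$ an $i$-swing of $\chi_A$, and conversely. Writing $s_i,s_{i,A}$ for the numbers of $i$-swings of $\chi,\chi_A$, $m=m(\chi)$, $m_A=\sum_{i\in T}s_{i,A}$ and $M=\sum_{i\in T}s_i=\sum_{A\subseteq S}m_A$, one gets $s_i=\sum_{A\subseteq S}s_{i,A}$ and
$$m-M=\sum_{i\in S}s_i=m\cdot\sum_{i\in S}B(\chi,i)\le\varepsilon m,\qquad\text{i.e.}\qquad M\ge(1-\varepsilon)m .$$
Hence $(s_i/M)_{i\in T}=\tfrac{m}{M}(B(\chi,i))_{i\in T}$ with $\tfrac{m}{M}\in[1,(1-\varepsilon)^{-1}]$, and this vector is the $(m_A/M)_A$-weighted average of the probability vectors $(B(\chi_A,i))_{i\in T}$ over $A$ with $m_A\ge 1$.

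Since a convex combination of probability vectors can be $\ell_1$-far from every one of them, $\chi'$ should be an aggregate of the slices rather than a single $\chi_A$. A natural candidate is a threshold game $g_t$ defined by $g_t(U)=1\Leftrightarrow\#\{A\subseteq S:\chi(U\cup A)=1\}\ge t\,2^{n-k}$ for a suitable $t\in(0,1]$: each $g_t$ is monotone, $\chi_\emptyset\le g_t\le\chi_S$, and with $\rho(W)=2^{-(n-k)}\#\{A:\chi(W\cup A)=1\}$ a voter $i\in T$ swings at $U$ precisely for $t$ in an interval of length $\rho(U\cup\{i\})-\rho(U)$, so that $\int_0^1\mathrm{swings}_i(g_t)\,dt=\sum_{U\subseteq T\setminus\{i\}}(\rho(U\cup\{i\})-\rho(U))=s_i/2^{n-k}$ for each $i\in T$. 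Thus on average over $t$ the vector $(\mathrm{swings}_i(g_t))_{i\in T}$ equals $2^{-(n-k)}(s_i)_{i\in T}$, and I would choose a $t$ for which all $k$ head coordinates, and hence $m(g_t)=\sum_{i\in T}\mathrm{swings}_i(g_t)$, are simultaneously close to their averages; this $g_t=:\chi'$ then satisfies $\mathrm{swings}_i(\chi')\approx s_i/2^{n-k}$ and $m(\chi')\approx M/2^{n-k}$.

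For the quantitative estimate I would exploit, using $\sum_{i\in T}\mathrm{swings}_i(\chi')=m(\chi')$, the cancellation identity
$$\sum_{i\in T}\bigl(s_i\,m(\chi')-\mathrm{swings}_i(\chi')\,m\bigr)=m(\chi')(M-m)=-m(\chi')\sum_{i\in S}s_i ,$$
whose right-hand side is small. Combining it with the per-coordinate approximations $\mathrm{swings}_i(\chi')\approx s_i/2^{n-k}$ (rather than bounding $\sum_{i\in T}|s_i\,m(\chi')-\mathrm{swings}_i(\chi')\,m|$ termwise, which loses a factor of two) and dividing by $m\,m(\chi')$ should give $\sum_{i\in T}|B(\chi,i)-B(\chi',i)|\le\frac{(2k+1)\varepsilon}{1-(k+1)\varepsilon}$, the $2k+1$ appearing as $k$ (from the head numerators $\mathrm{swings}_i(\chi')$), $+\,k$ (from the denominator $m(\chi')$), $+\,1$ (from the tail mass $\sum_{i\in S}s_i$), and the denominator $1-(k+1)\varepsilon$ coming from these $k+1$ normalizations, each costing about a factor $(1-\varepsilon)$.

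I expect two things to be the real work. The first is bookkeeping but delicate: extracting the \emph{sharp} constant $2k+1$ and the exact denominator $1-(k+1)\varepsilon$, rather than a crude $O(k\varepsilon)$ bound, forces one never to use the triangle inequality where cancellation is available, to make the selection of the threshold $t$ effective, and to track all $k+1$ normalizations exactly. The second is a genuine case distinction: $\chi'$ must be non-constant to be a simple game at all, so the degenerate possibilities must be excluded; the escape is that if the constructed $\chi'$ were constant then no voter of $T$ could be pivotal in $\chi$, forcing $\sum_{i\in S}B(\chi,i)=1>\varepsilon$ (here $\varepsilon<\tfrac1{k+1}\le 1$ is used), a contradiction — and if the primary choice of $t$ yields a constant $g_t$ one slides $t$ to a nearby non-constant threshold game at negligible extra cost.
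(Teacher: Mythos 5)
You should first note that the paper itself does not prove Theorem~\ref{thm_alon}: it is quoted from Alon and Edelman \cite{pre05681536}, so your attempt has to be measured against their argument. Your setup is genuinely on their track: the decomposition of head swings over tail assignments, and in particular the threshold game $g_{1/2}$, is exactly the projection Alon and Edelman use, and your identity $\int_0^1 \mathrm{swings}_i(g_t)\,dt=\sum_{U\subseteq T\setminus\{i\}}\bigl(\rho(U\cup\{i\})-\rho(U)\bigr)=s_i/2^{n-k}$ is correct. The genuine gap is the step ``I would choose a $t$ for which all $k$ head coordinates \dots are simultaneously close to their averages.'' Nothing in your argument guarantees such a $t$ exists: each $\mathrm{swings}_i(g_t)$ is a step function of $t$ whose average is $s_i/2^{n-k}$, but it may be far from that average for \emph{every} $t$ (the mean arising only through cancellation over $t$), and different coordinates may behave well only on disjoint ranges of $t$. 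Moreover, any bound of the required form must use the hypothesis $\sum_{i>k}B(\chi,i)\le\varepsilon$ to control these deviations, whereas the only quantitative use you make of it is $M\ge(1-\varepsilon)m$ together with the cancellation identity; these facts alone cannot produce the estimate.

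The missing engine is an isoperimetric-type lemma for monotone functions: for each $U\subseteq\{1,\dots,k\}$ the restriction $y\mapsto\chi(U\cup y)$ is monotone on the $(n-k)$-cube, and its number of swing edges --- all of which are swings of \emph{tail} voters --- is at least $\min\bigl(a_U,\,2^{n-k}-a_U\bigr)=2^{n-k}\min\bigl(\rho(U),1-\rho(U)\bigr)$, where $a_U$ is the number of ones. Summing over $U$ gives $D:=2^{n-k}\sum_{U}\min\bigl(\rho(U),1-\rho(U)\bigr)\le\sum_{i>k}s_i\le\varepsilon m$. With the \emph{fixed} threshold $t=\tfrac12$ one then checks, pair by pair $(U,U\cup\{i\})$, that $\bigl|2^{n-k}\,\mathrm{swings}_i(g_{1/2})-s_i\bigr|\le D$ for every $i\le k$ simultaneously, so no selection of $t$ is needed; summing over $i$ yields $2^{n-k}m(g_{1/2})\ge\bigl(1-(k+1)\varepsilon\bigr)m>0$, which also settles your degenerate case (this is where $\varepsilon<\tfrac{1}{k+1}$ enters). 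Feeding these two-sided estimates into $s_i/m-t_i/t$ as you outline then does give the constant $(2k+1)\varepsilon/\bigl(1-(k+1)\varepsilon\bigr)$. So your final bookkeeping is plausible, but without the lemma converting intermediate values of $\rho$ into tail swings there is no mechanism bounding the deviation of any threshold game from the averaged data, and the proof does not close.
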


Alon and Edelman have chosen $k=2$ and $\varepsilon=0.01$ to deduce that for a simple game with $\sum\limits_{i=3}^n B(\chi,i)\le 0.01$ the two-dimensional vector $(B(\chi,1),B(\chi,2))$ lies within $\Vert\cdot\Vert_1$-distance smaller than $\frac{1}{16}$ of one of the achievable Banzhaf vectors, i.~e.{} $(1,0)$, $(0,1)$, or $\left(\frac{1}{2},\frac{1}{2}\right)$. Thus for $n\ge 2$ voters the Banzhaf vector of every simple game has an $\Vert\cdot\Vert_1$-distance of at least $\min\left(0.5-\frac{1}{16},0.02\right)=0.02$ to the power distribution $(0.75,0.25,0,\dots)$.

We may improve this argument a bit by choosing $k=2$ and $\varepsilon=\frac{1}{18}$ to deduce that the minimal $\Vert\cdot\Vert_1$-distance to the power distribution $(0.75,0.25,0,\dots)$ is at least $\frac{1}{9}$, which is best possible using the general theorem.

\begin{conjecture}
  \label{conjecture_1}
  For $n\ge 2$ voters consider the following desired power distribution: $d_1=0.75$, $d_2=0.25$, and $d_i=0$ for $3\le i\le n$. For each simple game (complete simple game or
  weighted voting game) $\chi$ we have the inequalities 
  $
    \Vert \mathcal{SS}(\chi)-d\Vert_1=\sum\limits_{i=1}^n\left|\mathcal{SS}(\chi,i)-d_i\right|\ge \frac{1}{3}
  $
  and
  $
    \Vert \mathcal{BZ}(\chi)-d\Vert_1=\sum\limits_{i=1}^n\left|\mathcal{BZ}(\chi,i)-d_i\right|\ge
    \frac{k_{\left\lceil\frac{n}{2}\right\rceil}}{l_{\left\lceil\frac{n}{2}\right\rceil}},
  $
  where
  $
  k_1=1,\,\, k_m=\left\{\begin{array}{rcl}2k_{m-1}&:&m\equiv0\mod 2,\\8k_{m-1}-1&:&m\equiv 1\mod 2\end{array}\right.
  $
  for $m\ge 2$ and
  $
  l_1=2,\, \, l_2=5,\,\, l_m=\left\{\begin{array}{rcl}2l_{m-1}+3&:&m\equiv0\mod 2,\\8l_{m-1}-2&:&m\equiv 1\mod 2\end{array}\right.\text{ for }m\ge 3.
  $
\end{conjecture}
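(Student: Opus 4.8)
The plan is to prove the two inequalities separately, and in each case to combine an explicit family of extremal games --- showing the stated value is actually attained, hence cannot be raised --- with a matching lower bound valid for every simple game. Since every weighted voting game is a complete simple game and every complete simple game is a simple game, it is enough to prove the lower bounds for simple games, while the extremal families can be taken to consist of weighted voting games, so that both inequalities hold with the same right-hand side for all three classes simultaneously. A convenient first reduction: the target $d$ is already non-increasing and $\sum_i\mathcal{SS}(\chi,i)=\sum_i\mathcal{BZ}(\chi,i)=1=\sum_i d_i$, so for any power vector $p$ we have $\Vert p-d\Vert_1\ge\min_\pi\Vert\pi(p)-d\Vert_1=\Vert p^{\downarrow}-d\Vert_1$, where $p^{\downarrow}=(x_1\ge x_2\ge\dots\ge x_n)$; hence it suffices to bound $|x_1-\tfrac34|+|x_2-\tfrac14|+(1-x_1-x_2)$ from below, and in every case an easy split into three sign regimes shows that this quantity is $<\tfrac13$ only if the tail $\varepsilon:=1-x_1-x_2=\sum_{i\ge3}x_i$ satisfies $\varepsilon<\tfrac16$.

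For the extremal games I would take, on the Shapley--Shubik side, the weighted voting game $[\,3;2,1,1,0,\dots,0\,]$ (that is, $[\,3;2,1,1\,]$ padded with $n-3$ null voters). Its Shapley--Shubik vector is $\bigl(\tfrac23,\tfrac16,\tfrac16,0,\dots,0\bigr)$, and since adjoining null voters changes neither the power of the remaining voters nor the corresponding entries of $d$, its $\Vert\cdot\Vert_1$-distance to $d$ is exactly $\tfrac13$ for every $n\ge3$, which pins the constant. On the Banzhaf side null padding alone is not enough --- already for $n=5,6$ one has $\tfrac{15}{38}<\tfrac25$, forcing a genuine $\lceil n/2\rceil$-voter construction --- so I would build a recursive family with $\Gamma_1=[\,2;1,1\,]$, $\Gamma_2=[\,3;2,1,1\,]$, $\Gamma_3=[\,4;4,1,1,1,1\,]$, and in general $\Gamma_m$ obtained from $\Gamma_{m-1}$ by adjoining two voters in a shape that alternates with the parity of $m$, padded with null voters up to $n$. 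Writing out the numbers $\eta_i$ of $i$-swings and the total swing count along this recursion should reproduce precisely the recurrences for $k_m$ and $l_m$, the residue of $m$ modulo $2$ recording whether $\mathcal{BZ}(\Gamma_m,1)$ lies above or below $\tfrac34$ (so that $|x_1-\tfrac34|$ opens with the opposite sign and the bookkeeping falls into the two cases $\tfrac12-2x_2$ and $2(1-x_1-x_2)$); the base cases give $\tfrac12,\tfrac25,\tfrac{15}{38}$, matching $k_1/l_1,k_2/l_2,k_3/l_3$, and the inductive step is a finite swing count.

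For the lower bound, both indices lead to the same shape of argument: assume a game attains $|x_1-\tfrac34|+|x_2-\tfrac14|+(1-x_1-x_2)<\tfrac13$; then $\varepsilon<\tfrac16$, so the $n-2$ weakest voters together carry less than $\tfrac16$ of the power, and one wants to conclude that the game is close, in $\Vert\cdot\Vert_1$, to a two-voter simple game. But the sorted power vectors of two-voter simple games are only $(1,0)$ and $\bigl(\tfrac12,\tfrac12\bigr)$, each at distance $\tfrac12$ from $d$, so the distance to $d$ would then be at least $\tfrac12$ minus the perturbation, contradicting $<\tfrac13$ once the perturbation is controlled by a constant times $\varepsilon$. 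For the Banzhaf index the quantitative version of ``close to a two-voter game'' should be a sharpening of Theorem~\ref{thm_alon}: the extra leverage is that the swing counts $\eta_i$ are integers, all of a common parity, and bounded above by $\bigl(\lfloor\tfrac{n}{2}\rfloor+1\bigr)\binom{n}{\lfloor n/2\rfloor+1}$, which is exactly what quantizes the worst case to the specific rational $k_m/l_m$.

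The hard part --- and the reason this is a conjecture rather than a theorem --- is making the ``almost-two-voter'' reduction quantitatively sharp and, crucially, \emph{uniform in $n$}: the estimate behind Theorem~\ref{thm_alon} with $k=2$ only yields a lower bound of $\tfrac19$ for the Banzhaf distance to $(0.75,0.25,0,\dots)$, and no comparably strong general estimate is known for the Shapley--Shubik index at all, so a substantially better inequality would be needed to close the gap to $\tfrac13$ (respectively $k_m/l_m$). In the absence of such a bound, I would prove the statement for each fixed $n$ by exhaustively ranging over all simple games (respectively complete simple games, weighted voting games) with $n$ voters, combining the ILP formulation of Section~\ref{section_ilp} with the enumeration and branch-and-bound scheme of Section~\ref{section_exhaustive_enumeration}; this is what settles the Shapley--Shubik inequality for $n\le15$ and the Banzhaf inequality for $n\le11$ (simple games) and $n\le16$ (complete simple and weighted voting games), and these finitely many verified cases constitute the part of the conjecture I would present as proved.
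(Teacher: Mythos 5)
Your proposal is essentially the paper's own treatment of this statement: since no general argument is available (Alon--Edelman only yields $\tfrac19$ for the Banzhaf distance and no Shapley--Shubik analogue is known), the conjecture is established only for small $n$ by exhaustive search over the three game classes via the ILP formulation combined with orderly generation and branch\&bound, exactly in the ranges you cite, while the tightness examples you give (e.g.\ $[3;2,1,1]$ padded with null voters for Shapley--Shubik, and the $[5;4,1,1,1,1]$-type games for Banzhaf) agree with the optimal representations the paper reports. Your preliminary reductions (sorting the power vector, the tail bound $\varepsilon<\tfrac16$) and base-case computations are correct, and you rightly flag the recursive Banzhaf family and the quantitative ``almost-two-voter'' lemma as the unproved ingredients, which is precisely why the statement remains a conjecture in the paper.
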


\begin{table}[htp]
  \begin{center}
    \begin{tabular}{rcccccc}
    \hline
     $\mathbf{m}$                & 2                 & 3                 & 4                              & 5                              & 6 & 7\\
     $\mathbf{\frac{k_m}{l_m}}$  &$\!\!\frac{1}{2}=0.5\!\!$&$\!\!\frac{2}{5}=0.4\!\!$&$\!\!\frac{15}{18}\approx 0.39474\!\!$&$\!\!\frac{30}{79}\approx 0.37975\!\!$&
     $\!\!\frac{239}{630}\approx 0.37937\!\!$&$\!\!\frac{478}{1263}\approx 0.37846\!\!$\\
     \hline 
    \end{tabular}
    \caption{Values of $\frac{k_m}{l_m}$ -- the conjectured lower bound for the Banzhaf index.}
    \label{table_conjectured_lower_bounds_banzhaf}
  \end{center}
\end{table}

Using $\tau(n)=\frac{(-1)^{n+1}+1}{2}$ a closed expression for these numbers $\frac{k_m}{l_m}$, see also Table~\ref{table_conjectured_lower_bounds_banzhaf}, is given by
$k_m=\frac{7\cdot 2^{2m-2+\tau(m)}+2-\tau(m)}{15}$ and $l_m=2^{2m-3+\tau(m)}+\frac{11\cdot2^{2m-2+\tau(m)}+1-23\tau(m)}{15}$ for $m\ge 2$. Thus we have $\lim\limits_{m\to\infty}\frac{k_m}{l_m}=\frac{14}{37}\approx 0.378378>\frac{1}{3}$.

Conjecture~\ref{conjecture_1} looks rather specific. A first hint why $d_1=0.75$, $d_2=0.25$ might be hard to approximate is the fact that this point maximizes the $\Vert\cdot\Vert_1$-distance to the achievable power vectors $(1,0)$, $(0.5,0.5)$, $(0,1)$, both for the Shapley-Shubik and the Banzhaf index. For some small $n>2$ we compute the points or regions which maximize the $\Vert\cdot\Vert_1$-distance to the achievable power vectors in Appendix~\ref{appendix_a}. So far we have not seen a geometric structure behind the values $\frac{k_m}{l_m}$ for the Banzhaf index, but computing these numbers for simple games and weighted voting games with $n\le 16$ suggests that the rather structured recursion formulas for $k_m$ and $l_m$ might be more than a pure coincidence. Having Theorem~\ref{thm_alon} in mind one might conjecture that power distributions with many zeros are hard to approximate whenever they are hard to approximate without the zero values. A first step towards understanding the inverse power index problem might be the results on achievable hierarchies in simple games, see \cite{achievable}.

Additionally we conjecture that there is a similar result as Theorem~\ref{thm_alon} for the Shapley-Shubik index. In the next sections we will present some exact algorithms to solve the inverse power index problem and use them to prove Conjecture~\ref{conjecture_1} for small $n$.

\begin{conjecture}
  \label{conjecture_2}
  For each desired power distribution $d_i$ with $d_i\ge 0$ and $\sum\limits_{i=1}^nd_i=1$, where $n\ge 3$, there exist two weighted voting games
  (complete simple games or simple games) $\chi$, $\chi'$ with
  $
    \Vert \mathcal{BZ}(\chi)-d\Vert_1=\sum\limits_{i=1}^n\left|\mathcal{BZ}(\chi,i)-d_i\right|\le
    \frac{k_{\left\lceil\frac{n}{2}\right\rceil}}{l_{\left\lceil\frac{n}{2}\right\rceil}}
  $
  and
  $
    \Vert \mathcal{SS}(\chi')-d\Vert_1=\sum\limits_{i=1}^n\left|\mathcal{SS}(\chi',i)-d_i\right|\le \frac{1}{3}.
  $
\end{conjecture}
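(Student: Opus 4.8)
The plan is to turn the statement into a covering problem, settle it for small $n$ with the exact machinery of Sections~\ref{section_ilp} and~\ref{section_exhaustive_enumeration}, and leave the general case as the genuine obstacle. First I would exploit the chain of inclusions: weighted voting games sit inside complete simple games, which sit inside simple games, so the set of achievable Banzhaf (resp.\ Shapley--Shubik) vectors only grows along this chain and the best attainable $\Vert\cdot\Vert_1$-error only shrinks. Hence it suffices to produce the required $\chi,\chi'$ as \emph{weighted} voting games; the claims for complete simple games and for simple games then follow a fortiori. So the whole statement reduces to showing that, for every $n\ge 3$, the finite set $V_n^{\mathcal{BZ}}$ of Banzhaf vectors of weighted voting games on $n$ voters is a $\tfrac{k_{\lceil n/2\rceil}}{l_{\lceil n/2\rceil}}$-net of the probability simplex $\Delta_n$ in $\Vert\cdot\Vert_1$, and that $V_n^{\mathcal{SS}}$ is a $\tfrac13$-net of $\Delta_n$. (The restriction $n\ge 3$ is needed only for the Shapley--Shubik part, since for $n=2$ the vector $(0.75,0.25)$ is at $\Vert\cdot\Vert_1$-distance $\tfrac12>\tfrac13$ from every achievable Shapley--Shubik vector.)

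For a fixed $n$ this is a finite computation. One first produces $V_n^{\mathcal{BZ}}$ and $V_n^{\mathcal{SS}}$, either by the exhaustive enumeration of Section~\ref{section_exhaustive_enumeration} or, for larger $n$, by traversing the graded poset of~\cite{keijzer} and evaluating the power indices as in Section~\ref{section_basics}. Then one has to verify that the covering radius $\max_{d\in\Delta_n}\min_{v\in V_n}\Vert d-v\Vert_1$ does not exceed the conjectured value. Since each $d\mapsto\Vert d-v\Vert_1$ is linear on every cell of the common refinement of the hyperplane arrangements $\{d_i=v_i\}$ ($v\in V_n$, $1\le i\le n$), on each such cell the map $d\mapsto\min_v\Vert d-v\Vert_1$ is a minimum of finitely many linear functions, so its maximum over that cell intersected with $\Delta_n$ is computed by a single linear program; taking the largest optimum over the finitely many cells gives the covering radius exactly. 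More efficiently one may fix a candidate radius $r$ and test whether some $d\in\Delta_n$ satisfies $\Vert d-v\Vert_1\ge r$ for all $v\in V_n$: choosing, for each $v$, the sign pattern of $d-v$ turns this into a linear feasibility problem, and a short branch over sign patterns — heavily pruned by the simplex constraints — decides coverage; this is the branch\&bound idea of Subsection~\ref{subsec_b_and_b} run ``from the outside''. Carrying this out for weighted voting games with $n\le 16$ (and for the correspondingly smaller ranges for complete simple and simple games) establishes Conjecture~\ref{conjecture_2} in the same range as Conjecture~\ref{conjecture_1}.

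For general $n$ the natural attack is an induction that adds two voters at a time, matching the period-two recursions for $k_m$ and $l_m$. Given a target $d\in\Delta_{n+2}$, sort it decreasingly and split off the two smallest coordinates $d_{n+1},d_{n+2}$. If their sum is small, one would approximate the renormalised truncation $(d_1,\dots,d_n)$ by a weighted voting game on $n$ voters via the inductive hypothesis, append two near-null voters, and control the extra error with an Alon--Edelman-type estimate in the spirit of Theorem~\ref{thm_alon}; if instead the mass of $d$ is spread out, one would use a more symmetric family — for instance weights given by a rounding of $d$ together with a carefully chosen quota — and bound the deviation of its Banzhaf vector directly. The two branches of the recursion (coefficients $2$ and $8k-1$, resp.\ $2l+3$ and $8l-2$) plausibly encode precisely which construction is extremal in each parity class, and the worst-case games tabulated in Appendix~\ref{appendix_a} for very small $n$ are the place to read off the pattern.

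The hard part is exactly this last step, and I do not expect to close it. The upper-bound distribution $(0.75,0.25,0,\dots)$ of Conjecture~\ref{conjecture_1} at least has a heuristic justification — it maximises the $\Vert\cdot\Vert_1$-distance to the three attainable vectors $(1,0)$, $(\tfrac12,\tfrac12)$, $(0,1)$ — but, as noted after Table~\ref{table_conjectured_lower_bounds_banzhaf}, no geometric meaning is known for the numbers $\tfrac{k_m}{l_m}$, nor for the limit $\tfrac{14}{37}$. Without a structural description of the set of achievable Banzhaf vectors (the achievable-hierarchy results of~\cite{achievable} being only a first step) the inductive step has no visible handle. Accordingly the honest status of the statement is: proved by computation for the ranges of $n$ indicated above, and open in general.
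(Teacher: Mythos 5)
You are right that this statement is a conjecture: the paper offers no proof of it, and your honest assessment of its status (verifiable in principle for fixed small $n$, open in general) matches the paper. Your opening reduction is also sound: since weighted voting games are contained in complete simple games, which are contained in simple games, it suffices to exhibit $\chi,\chi'$ as weighted voting games, and the problem becomes a covering-radius statement for the finite sets $V_n^{\mathcal{BZ}}$, $V_n^{\mathcal{SS}}$ over the simplex. Your inductive sketch for general $n$ is speculative, but you flag it as such, which is consistent with the paper leaving the question open.

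The genuine gap is in the one concrete claim you do make, namely that carrying out the covering-radius computation ``for weighted voting games with $n\le 16$'' establishes Conjecture~\ref{conjecture_2} in the same range as Conjecture~\ref{conjecture_1}. That conflates two different computations. The tables in Section~\ref{section_computational_results} solve the inverse problem for the \emph{single fixed} target $d=(0.75,0.25,0,\dots)$ (and the $EU_n$ targets); the ILP of Section~\ref{section_ilp} and the branch\&bound of Subsection~\ref{subsec_b_and_b} take $d$ as data, so they certify Conjecture~\ref{conjecture_1} at that point but say nothing about coverage of the whole simplex, which is a max-min problem over $d$. Your proposed remedy -- enumerate $V_n$ and maximize the piecewise-linear function $d\mapsto\min_v\Vert d-v\Vert_1$ cell by cell -- needs the complete list of achievable power vectors, and the paper makes clear this is only available up to roughly $n=9$ for weighted voting games (the count for $n=10$ is unknown) and $n\le 7$ or so for simple games; moreover with $\approx 10^9$ achievable vectors at $n=9$ the hyperplane-arrangement branching is hopeless. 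The evidence the paper actually has for Conjecture~\ref{conjecture_2} is the hand case analysis of Appendix~\ref{appendix_a}, which determines the worst-case regions (and hence verifies the covering bound) only for $n\le 3$ for Shapley--Shubik and $n\le 4$ for Banzhaf; the remark there about letting the $d_i$ be range-bounded variables in the ILP yields a \emph{minimum} over $d$ in a region, useful for excluding candidate worst-case regions, not the maximum over $d$ that Conjecture~\ref{conjecture_2} requires. So your proposal should claim computational verification only for very small $n$, not for $n\le 16$.
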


\section{An ILP formulation for the inverse power index problem}
\label{section_ilp}

Our first exact approach to solve the inverse power index problem is to utilize an ILP formulation using \texttt{ILOG CPLEX 11} as an ILP solver. Therefore let $n$ be the number of voters and $d=\left(d_1,\dots,d_n\right)$ the desired power distribution with $d_i\ge 0$ and $\sum\limits_{i=1}^nd_i=1$. To model the monotone Boolean function $\chi$ we use binary variables $x_U$ which equal $\chi(U)$ for all subsets $U\subseteq N$. In order to express the values $\mathcal{SS}(\chi,i)$ and $\mathcal{BZ}(\chi,i)$ of the power indices for voter $i$ we additionally introduce binary variables $y_{i,U}$ which equal one iff $U$ is an~$i$-swing, see Equation~(\ref{eq_x_y}) With this we can state the Shapley-Shubik value for voter $i$ as
$$
  \mathcal{SS}(i)=p_i=\frac{1}{n!}\cdot\sum\limits_{U\subseteq N\backslash\{i\}} \Big(|U|!\cdot(n-|U|-1)!\Big)\cdot y_{i,U}
$$
and the number of $i$-swings as
$
  s_i=\sum\limits_{U\subseteq N\backslash\{i\}} y_{i,U}
$
for all $1\le i\le n$. To measure the deviation $\left|p_i-d_i\right|$ for voter~$i$ we introduce variables $\delta_i$ and obtain the following binary ILP formulation for the inverse Shapley-Shubik index problem with $\Vert\cdot\Vert_1$-norm on simple games:
\begin{align}
  \min && \sum_{i=1}^n \delta_i\\
  s.t.{} &&  p_i\le d_i+\delta_i&&\forall 1\le i\le n,\label{eq_dev_1}\\
  &&  p_i\ge d_i-\delta_i&&\forall 1\le i\le n,\label{eq_dev_2}\\
  && p_i=\frac{1}{n!}\cdot \sum\limits_{U\subseteq N\backslash\{i\}} \Big(|U|!\cdot(n-|U|-1)!\Big)\cdot y_{i,U}&&\forall 1\le i\le n,\label{eq_sg_power}\\
  && y_{i,U}=x_{U\cup\{i\}}-x_U&&\forall 1\le i\le n, U\subseteq N\backslash\{i\},\label{eq_x_y}\\
  && x_U\ge x_{U\backslash\{j\}}&& \forall \emptyset\neq U\subseteq N, j\in U,\label{eq_sg1}\\
  && x_\emptyset=0\label{eq_sg2}\\
  && x_N=1\label{eq_sg3}\\
  && x_U\in\{0,1\}&&\forall U\subseteq N,\label{eq_sg4}\\
  && y_{i,U}\in\{0,1\}&&\forall 1\le i\le n,U\subseteq N\backslash\{i\},\\
  && \delta_i\ge 0 &&\forall 1\le i\le n,\\
  && p_i\ge 0 &&\forall 1\le i\le n.
\end{align}

Of course we may directly eliminate all variables $y_{i,U}$ and $p_i$ from this formulation. The equations and inequalities (\ref{eq_sg1}), (\ref{eq_sg2}), (\ref{eq_sg3}), and (\ref{eq_sg4}) enforce that $\chi$ is a simple game. To restrict $\chi$ to complete simple games only small adjustments are necessary: We replace the subsets $U\subseteq N$ by its characteristic vectors $u\in\{0,1\}^n$ and rewrite Inequality~(\ref{eq_sg1}) to 
$x_u\le x_v$ for all $u,v\in\{0,1\}^n$ with $u\preceq v$, or more economically to 
\begin{equation}
  \label{ie_shift}
  x_u\le x_v\quad\quad\forall v\in\{0,1\}^n,u\in \to v :\Vert v\Vert_1\neq 0,
\end{equation}
where $\to v$ denotes the set of elements arising from $v$ by an right-shift of one of the rightmost $1$s in $v$. Examples are given by $\to (1,0,1,0)=\{(1,0,0,1),(0,1,1,0)\}$ and $\to (1,0,1)=\{(1,0,0),(0,1,1)\}$.

To restrict $\chi$ to weighted voting games we additionally have to introduce weights $w_i\ge 0$ and a quota $q>0$. To be able to decide whether $\sum\limits_{i\in U}w_i\ge q$ or $\sum\limits_{i\in U}w_i< q$ in the framework of ILPs we restrict ourself to weights $w_i$ which differ by at least one whenever they are different. (We may simply use integer weights, which could result in harder problems for the ILP solver.) To interlink the $x_U$ with the $w_i$ and $q$ we use\\[-6mm]
\begin{equation}
  q-(1-x_U)\cdot M-\sum\limits_{i\in U} w_i\le 0\quad\quad\forall U\subseteq N\label{iq_big_M_1}
\end{equation}\\[-6mm]
and\\[-6mm]
\begin{equation}
  -x_U\cdot M+\sum\limits_{i\in U} w_i\le q-1\quad\quad\forall U\subseteq N\label{iq_big_M_2},
\end{equation}\\[-4mm]
where $M$ is an suitably large constant fulfilling $M-1\ge\sum\limits_{i=1}^n w_i$. (We may choose $M=4n\left(\frac{n+1}{4}\right)^{(n+1)/2}$, see \cite[Theorem 9.3.2.1]{0243.94014}.) Now let us argue why Inequalities~(\ref{iq_big_M_1}) and (\ref{iq_big_M_2}) are necessary and sufficient. If $x_U=1$ in (\ref{iq_big_M_1}) and $x_U=0$ in (\ref{iq_big_M_2}) then the inequalities are equivalent to $\sum\limits_{i\in U} w_i\ge q$ and $\sum\limits_{i\in U} w_i\le q-1$, respectively. Otherwise (for $x_U\in\{0,1\}$) both sets of inequalities are valid due to $M-1\ge\sum\limits_{i=1}^n w_i$. So we have given ILP formulations for the considered three types of voting methods when the Shapley-Shubik index and the $\Vert\cdot\Vert_1$-norm is used. 

For the Banzhaf index we have to tackle the fact that its value for voter~$i$ is given by the following non-linear equality
$
  \mathcal{BZ}(i)=\frac{s_i}{\sum\limits_{j=1}^n s_j}
$
and we have to evaluate the deviation
$
  \sum\limits_{i=1}^n\left|\mathcal{BZ}(i)-d_i\right|=\left|\frac{s_i}{\sum\limits_{j=1}^n s_j}-d_i\right|
$.

To this end we set $s=\sum\limits_{i=1}^ns_i$ and $\delta_i=\left|s_i-d_i\cdot s\right|$. Instead of minimizing the sum of the $\delta_i$ we introduce
$\sum\limits_{i=1}^n \delta_i\le \alpha\cdot s$,
where $\alpha\in(0,2]$ is a properly chosen constant. With this we can state: If the following binary ILP has a solution then the corresponding simple game~$\chi$ approximates the desired power distribution~$d$ with an error in the $\Vert\cdot\Vert_1$-norm of at most $\alpha$. Otherwise no such approximation is possible. Thus we can minimize the deviation in the $\Vert\cdot\Vert_1$-norm by performing a bisection on $\alpha$. (Since $s$ lies between $n$ and $\left(\left\lfloor\frac{n}{2}\right\rfloor+1\right)\cdot{n\choose \left\lfloor\frac{n}{2}\right\rfloor+1}$, each two unequal Banzhaf vectors differ by at least $\left(\frac{1}{n2^n}\right)^2$, and  we only need $O(n)$ such steps.)

\begin{align}
  && \sum\limits_{i=1}^n s_i=s,&&
\end{align}

\begin{align}
  && \sum\limits_{i=1}^n \delta_i\le \alpha\cdot s,&&\\
  &&  s_i\le d_i\cdot s+\delta_i&&\forall 1\le i\le n,\\
  &&  s_i\ge d_i\cdot s-\delta_i&&\forall 1\le i\le n,\\
  && s_i=\sum\limits_{U\subseteq N\backslash\{i\}} y_{i,U}&&\forall 1\le i\le n,\\
  && y_{i,U}=x_{U\cup\{i\}}-x_U&&\forall 1\le i\le n, U\subseteq N\backslash\{i\},\\
  && x_U\ge x_{U\backslash\{j\}}&& \forall \emptyset\neq U\subseteq N, j\in U,\\
  && x_\emptyset=0\\
  && x_N=1\\
  && x_U\in\{0,1\}&&\forall U\subseteq N,\\
  && y_{i,U}\in\{0,1\}&&\forall 1\le i\le n,U\subseteq N\backslash\{i\},\\
  && \delta_i\ge 0 &&\forall 1\le i\le n,\\
  && s_i\ge 0 &&\forall 1\le i\le n.
\end{align}

Of course we can apply similar adjustments for this ILP formulation as we did for the one modeling the Shapley-Shubik index to restrict the voting method $\chi$, described by the $x_U$, to complete simple games or weighted voting games.

\subsection{Simplification for Conjecture \ref{conjecture_1}}

In order to computationally prove Conjecture~\ref{conjecture_1} for the Shapley-Shubik index and small $n$ we remove some variables and constraints from the previous ILP formulation. 
Since for the power distribution $d=(0.75,0.25,0\dots)$ we have $\sum\limits_{i=1}^n \delta_i=d_1+d_2+(1-p_1-p_2)$ we can delete all variables and constraints for $i\ge 3$:
\begin{align*}
  \min && d_1+d_2+1-p_1-p_2\\
  s.t.{} &&  p_i\le d_i+\delta_i&&\forall 1\le i\le 2,\\
  &&  p_i\ge d_i-\delta_i&&\forall 1\le i\le 2,\\
  && p_i=\frac{1}{n!}\cdot \sum\limits_{U\subseteq N\backslash\{i\}} \Big(|U|!\cdot(n-|U|-1)!\Big)\cdot y_{i,U}&&\forall 1\le i\le 2,\\
  && y_{i,U}=x_{U\cup\{i\}}-x_U&&\forall 1\le i\le 2, U\subseteq N\backslash\{i\},\\
  && x_U\ge x_{U\backslash\{j\}}&& \forall \emptyset\neq U\subseteq N, U\cap\{1,2\}\neq\emptyset, j\in U,\\
  && x_\emptyset=0,\\
  && x_N=1,\\
  && x_U\in\{0,1\}&&\forall U\subseteq N,
\end{align*}  
\begin{align*}
  && y_{i,U}\in\{0,1\}&&\forall 1\le i\le 2,U\subseteq N\backslash\{i\},\\
  && \delta_1,\delta_2\ge 0, &&\\
  && p_1,p_2\ge 0.
\end{align*}

\section{An exhaustive enumeration approach for the inverse power index problem}
\label{section_exhaustive_enumeration}

The conceptionally easiest exact algorithm to solve the inverse power index problem is to loop over the finite set of all voting methods and to pick that one that minimizes the deviation between the desired and the achieved power distribution. Table~\ref{table_numbers} proves this approach to be infeasible for simple games and $n\ge 8$. For complete simple games there exists a very effective parameterization, see \cite{complete_simple_games}. But, to our knowledge, nobody has determined the number of complete simple games for $n\ge 10$ yet. On the one hand the fraction of complete simple games which are weighted voting games tends to zero as the number of voters tends to infinity, but on the other hand no parameterization of weighted voting games is known. Several recursive constructions, mostly using super classes of weighted voting games, are presented in the literature \cite{keijzer2,0841.90134,csg,0205.17805}. Here we will describe the approach of \cite{csg,1163.91339} which exhaustively generates the whole class of complete simple games in the first run and afterwards checks each complete simple game whether it is weighted or not via a linear program, see also \cite{0943.91005,minimum_representation}. One variant is given by
\begin{align}
  &&  \sum_{i=1}^n u_iw_i \ge q &&\forall u\in \widehat{W},\\
  &&  1+\sum_{i=1}^n v_iw_i \le q &&\forall v\in \widehat{L},\\
  && w_i\ge w_{i+1} && \forall 1\le i\le n-1,\\
  && w_n,q\ge 0.
\end{align}
For $n=9$ voters it seems inconceivable to solve $284432730174$ linear programs in order to determine all weighted voting games, nevertheless this is -- more or less -- what was done in \cite{da_tautenhahn}\footnote{Additionally some computational tricks and heuristics were used to reduce the number of linear programs.}. A more effective way is described in \cite{minimum_representation} and uses partial complete simple games.

As mentioned before a complete simple game can be characterized by its set $\widehat{W}$ of shift-minimal winning vectors. Now we want to exhaustively generate complete simple games for a given number of voters using an orderly generation approach, see \cite{0392.05001}, i.~e.{} we start with an empty set $W$ and add shift-minimal winning vectors, which are decreasing with respect to the lexicographical ordering $\le_{\text{lex}}$\footnote{Here one may also read the coalition vectors as integers written in their binary expansion and use the ordinary ordering $\le$ of integers.}, step by step. In the corresponding generation tree for a node $W$ all successors $W'$ fulfill $W\subseteq W'$ and $y<_{\text{lex}}x$ for all $x\in W$, $y\in W'\backslash W$. Now let $L$ and $L'$ be the set of all shift-maximal losing coalitions corresponding to $W$ and $W'$. For a given set $W\neq\emptyset$ with lexicographically smallest element $w$ we define $\tilde{L}:=\Big\{v\in\mathbb{B}^n\mid\exists u\in W:\,v\prec u\Big\}\cup\,\Big\{v\in\mathbb{B}^n\mid\nexists u\in W\cup\left\{x\in\mathbb{B}^n\mid x<_{\text{lex}}w \right\}:\,v\succeq u\Big\}$, where $\mathbb{B}=\{0,1\}$
and condense the shift-maximal vectors of $\tilde{L}$ to a set $\dot{L}\subseteq\tilde{L}$. With this we have $\dot{L}\subseteq L$ and $\dot{L}\subseteq L'$ for all successors of $W$. Thus whenever the previous linear program where $\widehat{W}$ is replaced by $W$ and $\widehat{L}$ is replaced by $\dot{L}$ is infeasible, we can prune the whole search tree below node $W$.

Let us have an example for $n=7$ voters to illustrate the proposed algorithm. For the partial set 
$W=\left\{1 1 1 0 0 0 0, 1 1 0 1 0 1 0, 1 0 0 1 1 1 1 \right\}$ of shift-minimal winning vectors we can determine the 
partial set $$\dot{L}=\left\{1 1 0 1 0 0 0, 1 0 1 1 0 1 0, 1 1 0 0 1 1 0, 1 1 0 1 0 0 1, 0 1 0 1 1 1 1\right\}$$ of shift-maximal
losing vectors. Here $1 1 0 1 0 0 0$ is an element of $\dot{L}$ since we have $1 1 0 1 0 0 0\prec 1 1 1 0 0 0 0\in W$. Since
the linear program corresponding to $W$ and $\dot{L}$ is infeasible we know that none of the complete simple games given by
the set $W\cup X$ of shift-maximal winning vectors is weighted, where 
$$X\subset\left\{0 1 0 1 1 1 1, 0 1 0 1 1 1 0, 0 1 0 1 1 0 1, 0 0 1 1 1 1 1, 0 0 1 1 1 1 0, 0 0 1 1 1 0 1\right\}.$$
So at least $7$~complete games are shown to be non-weighted by solving one linear program.

Using the concept of orderly generation and partial complete games drastically reduces computation times. However the number of weighted voting games for $n=10$ voters remains unknown. So this approach is limited to rather small numbers of voters. 

\subsection{A branch\&bound approach for the inverse power index problem for weighted voting games}
\label{subsec_b_and_b}

Since we will see in Section~\ref{section_computational_results} that the LP relaxation of our ILP models from Section~\ref{section_ilp} is rather weak we propose a branch\&bound algorithm based on the discrete structure of partial complete games.

As in Section~\ref{section_exhaustive_enumeration} we start with an empty set $W$ of the shift-minimal winning vectors. Again we generate complete simple games via orderly generation and prune the subtree of $W$ whenever $W$ can not be extended to a weighted voting game. To further prune the search tree we additionally use the information of $W$ and the partial set $\dot{L}$ of shift-maximal losing vectors to determine some of the $x_U$-values in the ILPs of Section~\ref{section_ilp}. We need a computationally cheap lower bound on the deviation between the values of the given power index for all voting methods $\chi$ \textit{below} node $W$ and the desired power distribution $d$. If the best known solution so far is better than this lower bound, we can prune the search tree. A quite natural candidate for such a lower bound is the LP relaxation of our ILP models from Section~\ref{section_ilp}. To know good solutions right in the beginning of our branch\&bound tree we may utilize any heuristics for the inverse power index problem. In this paper we consider Leech's algorithm.

\section{Computational results}
\label{section_computational_results}

\begin{table}[htp!]
  \begin{center}
    \begin{tabular}{lrlrlr}
      \hline
      state & pop. & state & pop.  state & pop.\\
      \hline
      \hline
      Austria        &  8200 & Germany    & 82500 & Netherlands    & 16300\\
      Belgium        & 10400 & Greece     & 11100 & Poland         & 38200\\
      Bulgaria       &  7800 & Hungary    & 10200 & Portugal       & 10100\\
      Cyprus         &   700 & Ireland    &  4100 & Romania        & 21700\\
      Czech Republic & 10500 & Italy      & 58500 & Slovakia       &  5400\\
      Denmark        &  5400 & Latvia     &  3400 & Slovenia       &  2000\\
      Estonia        &  1300 & Lithuania  &  2300 & Spain          & 43000\\
      Finland        &  5200 & Luxembourg &   500 & Sweden         &  9000\\
      France         & 60000 & Malta      &   400 & United Kingdom & 60600\\
      \hline
    \end{tabular}
    \caption{Population (in thousands) of the $27$ member states of the European Union.}
    \label{table_population}
  \end{center}
\end{table}

\begin{table}[htp!]
  \begin{center}
    \begin{tabular}{c|crr|crr}
      \hline
      & \multicolumn{3}{|c|}{\textbf{simple games}} & \multicolumn{3}{|c}{\textbf{complete simple games}}\\
      \textbf{inst.{}} & $\mathbf{\Vert\cdot\Vert_1}$ & \textbf{nodes} & \textbf{time} & $\mathbf{\Vert\cdot\Vert_1}$ & \textbf{nodes} & \textbf{time} \\
      \hline
      $\mathbf{EU_{1}}$  & $0.00000\cdot 10^{-0}$ &        1 &  0.01s & $0.00000\cdot 10^{-0}$ &      1 &   0.01s \\
      $\mathbf{EU_{2}}$  & $7.69740\cdot 10^{-2}$ &        1 &  0.01s & $7.69740\cdot 10^{-2}$ &      1 &   0.01s \\
      $\mathbf{EU_{3}}$  & $7.13793\cdot 10^{-2}$ &        5 &  0.05s & $7.13793\cdot 10^{-2}$ &      4 &   0.01s \\
      $\mathbf{EU_{4}}$  & $6.30740\cdot 10^{-2}$ &       28 &  0.01s & $6.30740\cdot 10^{-2}$ &      9 &   0.02s \\
      $\mathbf{EU_{5}}$  & $6.90250\cdot 10^{-2}$ &      303 &  0.09s & $6.90250\cdot 10^{-2}$ &     26 &   0.04s \\
      $\mathbf{EU_{6}}$  & $4.18923\cdot 10^{-2}$ &    18265 &  5.91s & $5.40190\cdot 10^{-2}$ &     35 &   0.11s \\
      $\mathbf{EU_{7}}$  & $2.39402\cdot 10^{-2}$ & 71483741 & 15.88h & $3.75078\cdot 10^{-2}$ &     54 &   0.41s \\
      $\mathbf{EU_{8}}$  & & &                                        & $1.78178\cdot 10^{-2}$ &    210 &   3.08s \\
      $\mathbf{EU_{9}}$  & & &                                        & $6.89922\cdot 10^{-3}$ &   1271 &  95.91s \\
      $\mathbf{EU_{10}}$ & & &                                        & $3.65220\cdot 10^{-3}$ &  19520 & 101.21m \\
      $\mathbf{EU_{11}}$ & & &                                        & $1.31318\cdot 10^{-3}$ & 349125 &  55.25h \\
      \hline
    \end{tabular}
    \caption{Results for the EU using the Shapley-Shubik power index.}
    \label{table_EU_SS_1-2}
  \end{center}
\end{table}

As a realistic input instance for the inverse power index problem we use the $27$~countries of the European Union with population numbers $pop_i$ given in Table~\ref{table_population}. Instance $EU_n$ consists of the $n$ countries with the largest population, where the desired power is given by
$$
  d_i=\frac{\sqrt{pop_i}}{\sum\limits_{j=1}^n\sqrt{pop_j}},
$$
i.~e.{} we apply Penrose square root law. In Table~\ref{table_EU_SS_1-2} and Table~\ref{table_EU_SS_3} we state the results for simple games, complete simple games, and
weighted voting games, respectively, using the Shapley-Shubik power index.

The weighted voting games corresponding to Table~\ref{table_EU_SS_3} can be represented by their quota and their weights yielding: $[1;1]$, $[2;1,1]$, $[2;1,1,1]$, $[3;1,1,1,1]$, $[4;1,1,1,1,1]$, $[14;5,5,4,4,3,3]$, $[18;9,8,7,7,6,5,4]$,\\
$[41;13,11,11,10,9,9,7,6]$, $[92;30,27,27,26,23,22,17,16,14]$, $[109;34,29,29,28,25,23,18,16,14,13]$, and 
$[339;96,82,81,80,71,65,51,44,38,36,36]$.

\begin{table}[htp!]
  \begin{center}
    \begin{tabular}{c|crrc}
      \hline
      & \multicolumn{4}{|c}{\textbf{weighted voting games}}\\
      \textbf{inst.{}} & $\mathbf{\Vert\cdot\Vert_1}$ & \textbf{nodes} & \textbf{time} & \textbf{lower bound}\\
      \hline
      $\mathbf{EU_{1}}$  & $0.00000\cdot 10^{-0}$ &      1 &  0.01s & $0.00000\cdot 10^{-0}$ \\
      $\mathbf{EU_{2}}$  & $7.69740\cdot 10^{-2}$ &      3 &  0.01s & $7.69740\cdot 10^{-2}$ \\
      $\mathbf{EU_{3}}$  & $7.13793\cdot 10^{-2}$ &      9 &  0.01s & $7.13793\cdot 10^{-2}$ \\
      $\mathbf{EU_{4}}$  & $6.30740\cdot 10^{-2}$ &      5 &  0.02s & $6.30740\cdot 10^{-2}$ \\
      $\mathbf{EU_{5}}$  & $6.90250\cdot 10^{-2}$ &     23 &  0.04s & $6.86700\cdot 10^{-3}$ \\
      $\mathbf{EU_{6}}$  & $5.40190\cdot 10^{-2}$ &     29 &  0.17s & $2.52257\cdot 10^{-2}$ \\
      $\mathbf{EU_{7}}$  & $3.75078\cdot 10^{-2}$ &     44 &  0.40s & $4.97897\cdot 10^{-3}$ \\
      $\mathbf{EU_{8}}$  & $1.78178\cdot 10^{-2}$ &    165 &  2.86s & $2.20380\cdot 10^{-3}$ \\
      $\mathbf{EU_{9}}$  & $6.89924\cdot 10^{-3}$ &   1449 & 49.65s & $7.19156\cdot 10^{-4}$ \\
      $\mathbf{EU_{10}}$ & $5.35622\cdot 10^{-3}$ &  27517 & 28.68m & $1.12706\cdot 10^{-3}$ \\
      $\mathbf{EU_{11}}$ & $3.62854\cdot 10^{-3}$ & 676774 & 69.16h & $1.14349\cdot 10^{-4}$ \\
      %
      %
      \hline
    \end{tabular}
    \caption{Results for the EU using the Shapley-Shubik power index.}
    \label{table_EU_SS_3}
  \end{center}
\end{table}

For $n\le 5$ the optimal solutions for simple games and complete simple games coincide. For larger $n$ simple games admit better approximations but require a huge amount of cpu time. For $n\le 9$ the optimal solutions for complete simple games and weighted voting games coincide. We would like to remark that for weighted voting games the Inequalities~(\ref{iq_big_M_1}) and (\ref{iq_big_M_2}) would suffice if the $y$-variables are binaries but adding the valid Inequalities~(\ref{ie_shift}) drastically reduces the necessary cpu time.

\begin{table}
 \begin{center}
   \begin{tabular}{rrrrrrrrrr}
     \hline
     $\mathbf{n}$     &\!1\!&\!2\!&\!3\!&\!4\!&\!5               \!&\!6\!              \!&\!7                 \!&\!8\!\\
     \textbf{LP-bound}&\!0\!&\!0\!&\!0\!&\!0\!&\!$1\cdot 10^{-6}$\!&\!$1\cdot 10^{-6}$\!& $1.4\cdot 10^{-6}$\!&\!0\!\\
      $\mathbf{n}$     &\!9               \!&\!10              \!&\!11              \!&\!12              \!&\!13              \!&\!14              \!&\!15\!\\
      \textbf{LP-bound}&\!$9\cdot 10^{-7}$\!&\!$8\cdot 10^{-7}$\!&\!$1\cdot 10^{-7}$\!&\!$3\cdot 10^{-7}$\!&\!$2\cdot 10^{-7}$\!&\!$8\cdot 10^{-7}$\!&\!$6\cdot 10^{-7}$\!\\
     \hline
   \end{tabular}
   \caption{Lower LP-bounds for the instances $EU_{n}$ for simple games, complete simple games, and weighted voting games.}
   \label{table_lower_lp_bounds_ss_fair}
 \end{center}
\end{table}

We would like to remark that our ILP formulations for the inverse power index problem have a large integrality gap so that the lower bound stays zero for a long time during the solution process. Being more precisely we state the lower LP-bounds for the instances $EU_{n}$ for $1\le n\le 15$ players. It turns out that these bounds coincide for simple games, complete simple, and weighted voting games, but there are differences in the necessary computation times. The simple games for $n\ge 13$ took 14~seconds, 1~minute, and 6~minutes, respectively. For complete simple games the computation times for $n\ge 13$ are 2~seconds, 8~seconds, and 4~minutes. Solving the linear programs for weighted voting games with $n\ge 13$ took substantially longer: 4~minutes, 3~minutes and 55~minutes. For the instances $hard_{n}$, see below, we remark that for all considered voting methods and $2\le n\le 16$ voters the lower LP-bound was given by zero and that we have observed a similar behavior of the computation times.

Since the given LP formulation is that weak, in terms of the integrality gap, we might use the general restrictions on the achievable power vectors from Section~\ref{section_a_priori}. Another possibility to obtain a lower bound is to drop the $x$-variables from our ILP formulations and to aggregate the $y_{i,U}$-variables to $z_{i,|U|}$-variables:
\begin{align}
  \min && \sum_{i=1}^n \delta_i\nonumber\\
  s.t.{} &&  d_i-\delta_i \le p_i\le d_i+\delta_i&&\forall 1\le i\le n,\nonumber\\
  && n!\cdot p_i=\sum\limits_{j=0}^{\left\lceil\frac{n-2}{2}\right\rceil} j!\cdot(n-j-1)!\cdot y_{i,j}&&\forall 1\le i\le n,\nonumber\\
  && \sum_{i=1}^n p_i=1,&&\label{eq_power_sums_to_one}\\
  && z_{i,j} \le 2\cdot {n-1 \choose j} && \forall 1\le i\le n,\,\forall 0\le j\le \left\lfloor\frac{n-2}{2}\right\rfloor,\nonumber\\
  && z_{i,\frac{n-1}{2}} \le {n-1 \choose \frac{n-1}{2}} && \forall 1\le i\le n,\, n\equiv 1\pmod 2,\nonumber\\
  && z_{i,j}\in\mathbb{N}&&\forall 1\le i\le n,\,\forall 0\le j\le \left\lceil\frac{n-2}{2}\right\rceil,\nonumber\\
  && \delta_i\ge 0 &&\forall 1\le i\le n.\nonumber
\end{align}
Although the resulting lower bound, see the sixth column in Table~\ref{table_EU_SS_3} \footnote{We may also add inequalities to ensure that all numbers of swings for a given voter have the same parity.}, is valid even for simple games, it turns out to be very useful, at least for small~$n$. If we drop Equation~(\ref{eq_power_sums_to_one}) the ILP decomposes into $n$~simpler ILPs which can be solved more directly, e.~g.{} via enumeration. 

\begin{table}[htp!]
  \begin{center}
    \begin{tabular}{c|crr|crr}
      \hline
      & \multicolumn{3}{|c|}{\textbf{simple games}} & \multicolumn{3}{|c}{\textbf{complete simple games}}\\
      \textbf{inst.{}} & $\mathbf{\Vert\cdot\Vert_1}$ & \textbf{nodes} & \textbf{time} & $\mathbf{\Vert\cdot\Vert_1}$ & \textbf{nodes} & \textbf{time} \\
      \hline
      $\mathbf{hard_{2}}$  & 0.5000 &    1 &   0.01s & 0.5000 &   1 &   0.01s \\
      $\mathbf{hard_{3}}$  & 0.3333 &    1 &   0.07s & 0.3333 &   1 &   0.07s \\
      $\mathbf{hard_{4}}$  & 0.3333 &    6 &   0.05s & 0.3333 &   7 &   0.01s \\
      $\mathbf{hard_{5}}$  & 0.3333 &   14 &   0.02s & 0.3333 &  10 &   0.02s \\
      $\mathbf{hard_{6}}$  & 0.3333 &   28 &   0.09s & 0.3333 &  22 &   0.03s \\
      $\mathbf{hard_{7}}$  & 0.3333 &   31 &   0.11s & 0.3333 &  35 &   0.05s \\
      $\mathbf{hard_{8}}$  & 0.3333 &   33 &   0.19s & 0.3333 &  43 &   0.12s \\
      $\mathbf{hard_{9}}$  & 0.3333 &   43 &   0.73s & 0.3333 &  54 &   0.33s \\
      $\mathbf{hard_{10}}$ & 0.3333 &   69 &   2.30s & 0.3333 & 127 &   0.78s \\
      $\mathbf{hard_{11}}$ & 0.3333 &  183 &  21.31s & 0.3333 & 126 &   2.85s \\
      $\mathbf{hard_{12}}$ & 0.3333 &  177 & 170.56s & 0.3333 & 207 &  18.62s \\
      $\mathbf{hard_{13}}$ & 0.3333 &  151 & 761.24s & 0.3333 & 221 & 141.79s \\
      $\mathbf{hard_{14}}$ & 0.3333 & 2354 &   4.32h & 0.3333 & 194 &  12.52m \\
      $\mathbf{hard_{15}}$ & 0.3333 & 3309 &  23.49h & 0.3333 & 353 &   1.22h \\
      $\mathbf{hard_{16}}$ &        &      &         & 0.3333 & 829 &  53.67h \\
      %
      %
      \hline
    \end{tabular}
    \caption{Results for the desired power distribution $(0.75,0.25,0,\dots)$ using the Shapley-Shubik power index.}
    \label{table_hard_SS_1-2}
  \end{center}
\end{table}

Following Conjecture~\ref{conjecture_1} and \ref{conjecture_2} for $n\ge 2$ we consider the instances $hard_n$ corresponding to the desired power distribution $d=(0.75,0.25,0,\dots)$. In Table~\ref{table_hard_SS_1-2} and Table~\ref{table_hard_SS_3} we list the results for simple games, complete simple games, and weighted voting games, respectively, using the Shapley-Shubik power index.

\begin{table}[htp!]
  \begin{center}
    \begin{tabular}{c|crr}
      \hline
      & \multicolumn{3}{|c}{\textbf{weighted voting games}}\\
      \textbf{inst.{}} & $\mathbf{\Vert\cdot\Vert_1}$ & \textbf{nodes} & \textbf{time}\\
      \hline
      $\mathbf{hard_{2}}$  & 0.5000 &    1 &   0.01s \\
      $\mathbf{hard_{3}}$  & 0.3333 &    3 &   0.02s \\
      $\mathbf{hard_{4}}$  & 0.3333 &    6 &   0.02s \\
      $\mathbf{hard_{5}}$  & 0.3333 &   13 &   0.03s \\
      $\mathbf{hard_{6}}$  & 0.3333 &   19 &   0.05s \\
      $\mathbf{hard_{7}}$  & 0.3333 &   31 &   0.08s \\
      $\mathbf{hard_{8}}$  & 0.3333 &   36 &   0.17s \\
      $\mathbf{hard_{9}}$  & 0.3333 &   60 &   0.43s \\
      $\mathbf{hard_{10}}$ & 0.3333 &  101 &   1.47s \\
      $\mathbf{hard_{11}}$ & 0.3333 &  153 &   4.05s \\
      $\mathbf{hard_{12}}$ & 0.3333 &  208 &  41.09s \\
      $\mathbf{hard_{13}}$ & 0.3333 &  143 & 252.71s \\
      $\mathbf{hard_{14}}$ & 0.3333 &  339 &  51.14m \\
      $\mathbf{hard_{15}}$ & 0.3333 &  579 &   6.73h \\
      $\mathbf{hard_{16}}$ & 0.3333 & 1087 &  36.80h \\
      \hline
    \end{tabular}
    \caption{Results for the desired power distribution $(0.75,0.25,0,\dots)$ using the Shapley-Shubik power index.}
    \label{table_hard_SS_3}
  \end{center}
\end{table}

In Table~\ref{table_hard_BZ_1-2} and Table~\ref{table_hard_BZ_3} we list the results for simple games, complete simple games, and weighted voting games, respectively, using the Banzhaf power index.

\begin{table}[htp!]
  \begin{center}
    \begin{tabular}{c|crr|crr}
      \hline
      & \multicolumn{3}{|c|}{\textbf{simple games}} & \multicolumn{3}{|c}{\textbf{complete simple games}}\\
      \textbf{inst.{}} & $\mathbf{\Vert\cdot\Vert_1}$ & \textbf{nodes} & \textbf{time} & $\mathbf{\Vert\cdot\Vert_1}$ & \textbf{nodes} & \textbf{time} \\
      \hline
      $\mathbf{hard_{2}}$  & $0.5000000$ &      1 &   0.01s & $0.5000000$ &   1 &   0.01s \\
      $\mathbf{hard_{3}}$  & $0.4000000$ &      6 &   0.01s & $0.4000000$ &   5 &   0.07s \\
      $\mathbf{hard_{4}}$  & $0.4000000$ &     11 &   0.01s & $0.4000000$ &   9 &   0.17s \\
      $\mathbf{hard_{5}}$  & $0.3947368$ &     36 &   0.02s & $0.3947368$ &  13 &   0.25s \\
      $\mathbf{hard_{6}}$  & $0.3947368$ &     46 &   0.10s & $0.3947368$ &  21 &   0.26s \\
      $\mathbf{hard_{7}}$  & $0.3797468$ &    177 &   0.27s & $0.3797468$ &  15 &   0.18s \\
      $\mathbf{hard_{8}}$  & $0.3797468$ &   1923 &   7.80s & $0.3797468$ &  55 &   1.15s \\
      $\mathbf{hard_{9}}$  & $0.3793651$ &  11801 &  82.46s & $0.3793651$ &  61 &   2.42s \\
      $\mathbf{hard_{10}}$ & $0.3793651$ & 167868 &  57.60s & $0.3793651$ &  61 &  17.11s \\
      $\mathbf{hard_{11}}$ & $0.3784640$ & 311199 & 140.85m & $0.3784640$ & 110 & 131.67s \\
      $\mathbf{hard_{12}}$ &             &        &         & $0.3784640$ & 119 &  18.07m \\
      $\mathbf{hard_{13}}$ &             &        &         & $0.3784399$ & 119 &  87.85m \\
      $\mathbf{hard_{14}}$ &             &        &         & $0.3784399$ & 191 &   9.48h \\
      $\mathbf{hard_{15}}$ &             &        &         & $0.3783563$ & 217 & 23.78h \\
      $\mathbf{hard_{16}}$ &             &        &         & $0.3783563$ & 248 &  6.84d \\
      \hline
      %
      %
    \end{tabular}
    \caption{Results for the desired power distribution $(0.75,0.25,0,\dots)$ using the Banzhaf power index.}
    \label{table_hard_BZ_1-2}
  \end{center}
\end{table}

\begin{table}[htp!]
  \begin{center}
    \begin{tabular}{c|crrl}
      \hline
      & \multicolumn{4}{|c}{\textbf{weighted voting games}}\\
      \textbf{inst.{}} & $\mathbf{\Vert\cdot\Vert_1}$ & \textbf{nodes} & \textbf{time} & \textbf{representation} \\
      \hline
      $\mathbf{hard_{2}}$  & $0.5000000$ &   1 &   0.01s & [1;1,0]\\
      $\mathbf{hard_{3}}$  & $0.4000000$ &   6 &   0.17s & [3;2,1,1]\\
      $\mathbf{hard_{4}}$  & $0.4000000$ &  26 &   0.16s & [3;2,1,1,0]\\
      $\mathbf{hard_{5}}$  & $0.3947368$ &  18 &   0.17s & [5;4,1,1,1,1]\\
      $\mathbf{hard_{6}}$  & $0.3947368$ &  45 &   0.28s & [5;4,1,1,1,1,0]\\
      $\mathbf{hard_{7}}$  & $0.3797468$ &  38 &   0.62s & [14;11,3,3,3,2,1,1]\\
      $\mathbf{hard_{8}}$  & $0.3797468$ &  55 &   1.41s & [14;11,3,3,3,2,1,1,0]\\
      $\mathbf{hard_{9}}$  & $0.3793651$ &  50 &   4.00s & [33;26,7,7,7,5,2,2,1,1]\\
      $\mathbf{hard_{10}}$ & $0.3793651$ &  73 &  24.20s & [33;26,7,7,7,5,2,2,1,1,0]\\
      $\mathbf{hard_{11}}$ & $0.3784640$ & 138 & 140.95s & [80;63,17,17,17,12,5,5,2,2,1,1]\\
      $\mathbf{hard_{12}}$ & $0.3784640$ & 108 &  26.64m & [80;63,17,17,17,12,5,5,2,2,1,1,0]\\
      $\mathbf{hard_{13}}$ & $0.3784399$ & 111 &  75.33m & [193;152,41,41,41,29,12,12,5,5,2,2,1,1] \\
      $\mathbf{hard_{14}}$ & $0.3784399$ & 153 &  12.30h & [193;152,41,41,41,29,12,12,5,5,2,2,1,1,0]\\
      $\mathbf{hard_{15}}$ & $0.3783563$ & 151 & 1.31d & [466;367,99,99,99,70,29,29,12,12,5,5,2,2,1,1]\\ 
      $\mathbf{hard_{16}}$ & $0.3783563$ & 116 & 2.34d & [466;367,99,99,99,70,29,29,12,12,5,5,2,2,1,1,0]\\ 
      \hline
    \end{tabular}
    \caption{Results for the desired power distribution $(0.75,0.25,0,\dots)$ using the Banzhaf power index.}
    \label{table_hard_BZ_3}
  \end{center}
\end{table}

%
%

\section{Power distributions which are hard to approximate}
\label{sec_hard_to_approximate}

In the previous section we have computationally shown that the desired power distribution $(0.75,0.25,0,\dots)$ is hard to approximate by Shapley-Shubik or Banzhaf power vectors. Since it is reported that the proposed heuristics for the inverse power index problem perform well on practical instances one may conjecture that achievable power vectors are dense in those regions which are of practical importance and that the bad approximation property of $(0.75,0.25,0,\dots)$ is a very singular phenomenon.

To shed some light on this matter we will consider two dimensional projections of the set of all achievable power distributions for small $n$. For $1\le i<j\le n$ and a (normalized) power index $\mathcal{P}$ we define $\mathcal{A}_{i,j}^{\mathcal{P}}=$
$$
  \left\{(x,y)\mid \exists \text{ wvg }\chi\text{ consisting of $n$ ordered voters with }
  \mathcal{P}(i)=x,\mathcal{P}(j)=y\right\}.
$$
to be the set of all achievable power vectors for ordered weighted voting games consisting of $n$ voters and a given power index $\mathcal{P}$. 
By ordered we mean that voter $i$ is as least as powerful as voter $i+1$ for all $1\le i<n$.

\begin{figure}[!htp]
  \begin{center}
    \includegraphics[width=5cm]{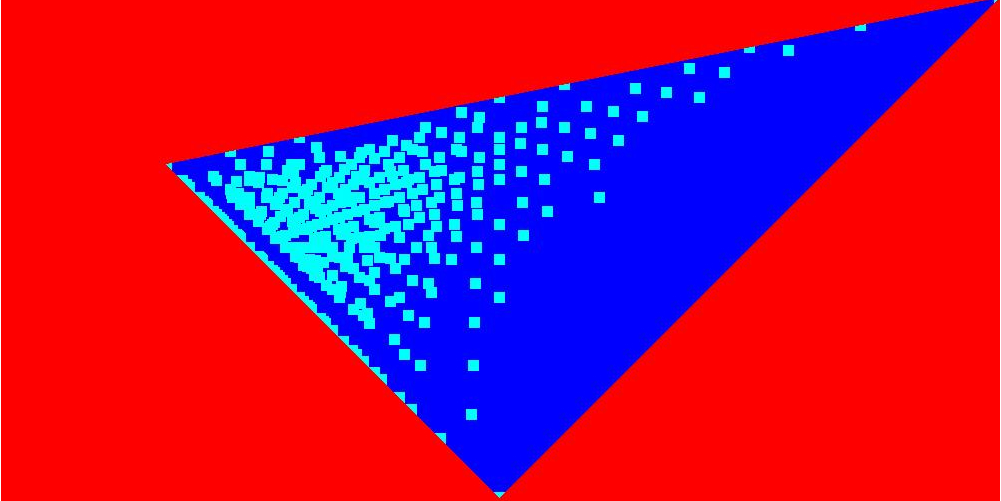}
    \quad\quad
    \includegraphics[width=5cm]{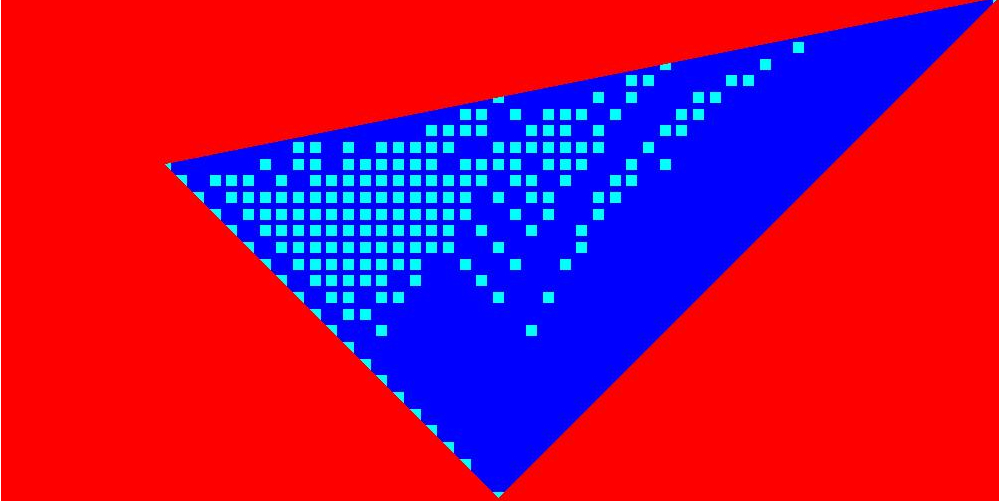}
    
    \medskip
    
    \includegraphics[width=5cm]{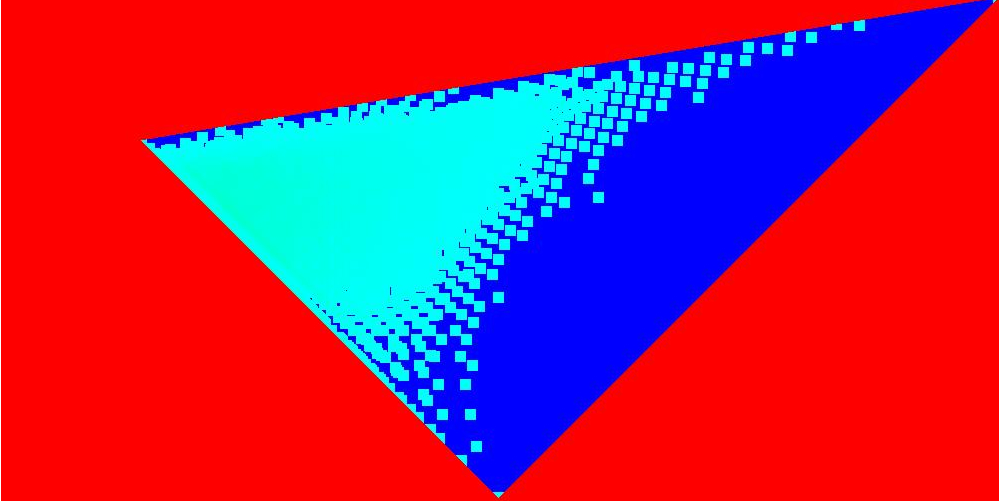}
    \quad\quad
    \includegraphics[width=5cm]{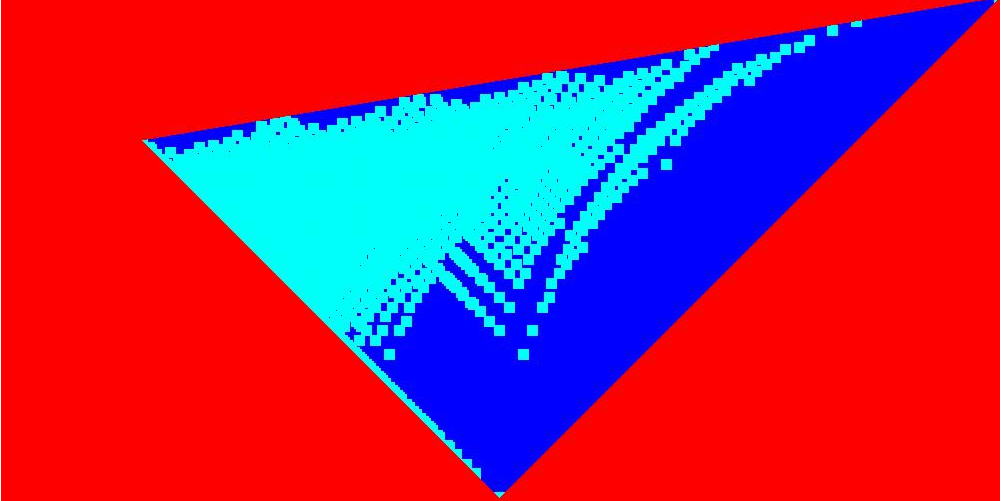}
    
    \medskip
    
    \includegraphics[width=5cm]{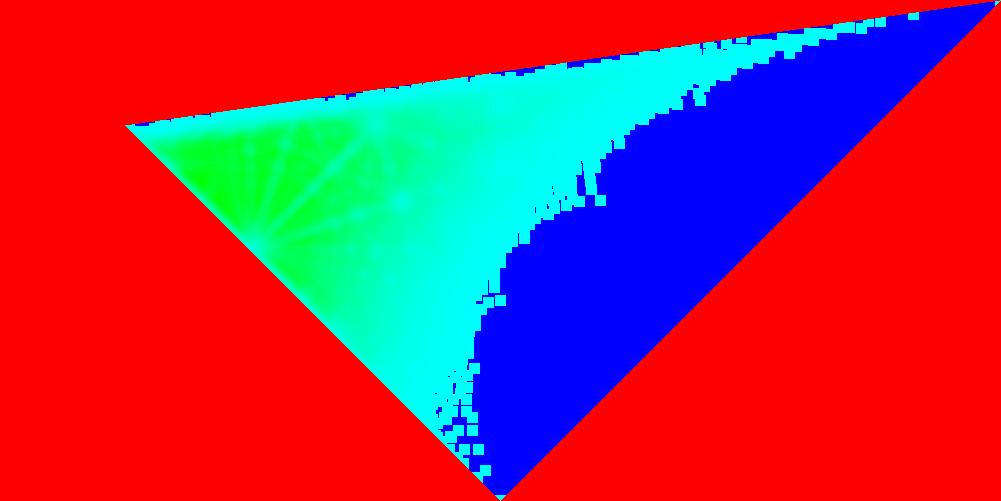}
    \quad\quad
    \includegraphics[width=5cm]{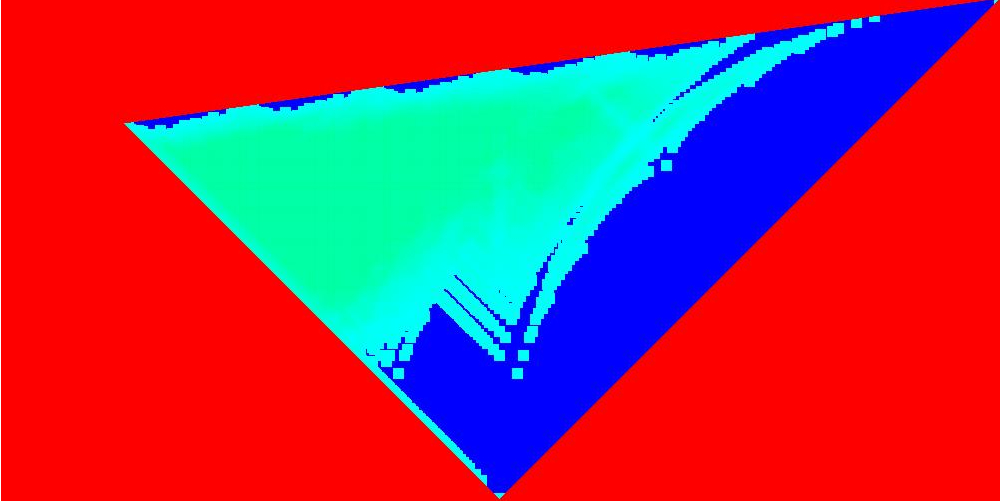}
    
    \medskip
    
    \includegraphics[width=5cm]{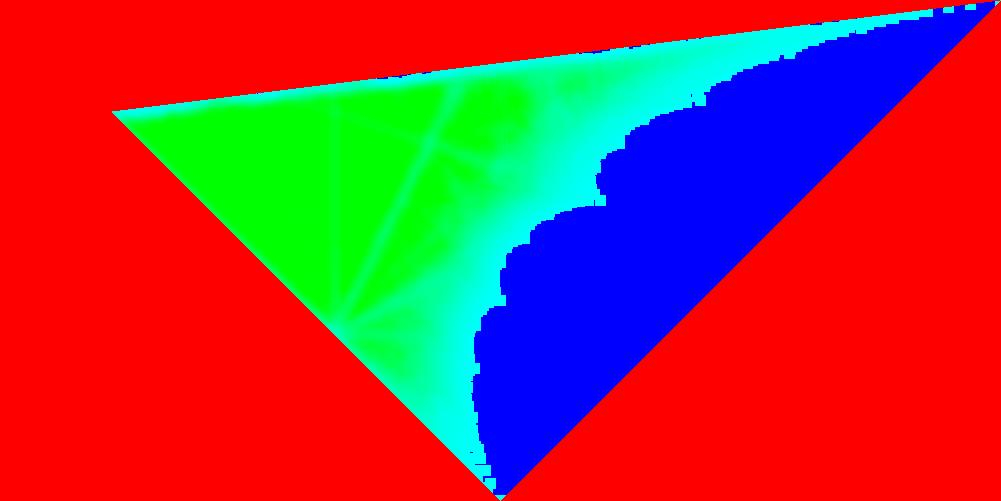}
    \quad\quad
    \includegraphics[width=5cm]{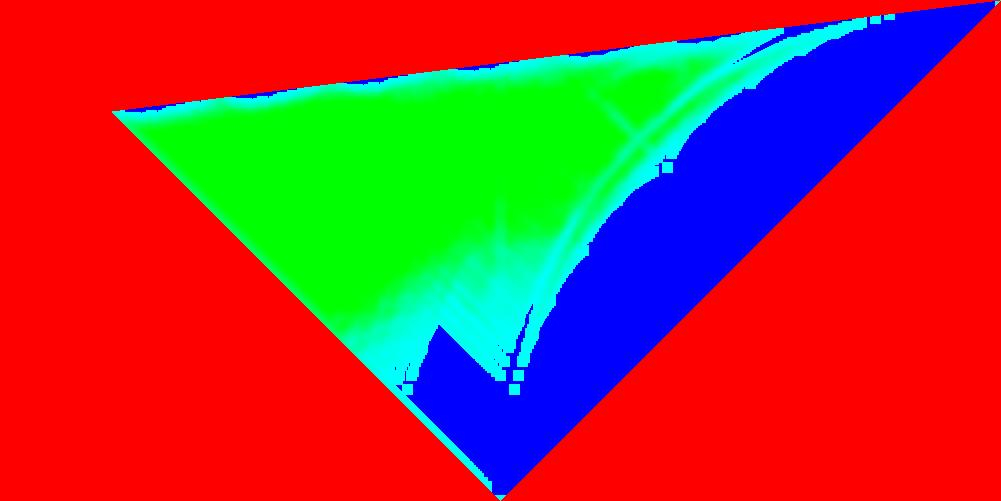}
    \caption{Projections to the first two voters of achievable power distributions for $6\le n\le 9$ voters and the power indices Banzhaf and Shapley Shubik, i.~e.{} the sets
    $\mathcal{A}_{1,2}^{\mathcal{P}}$ for $\mathcal{P}\in\left\{\mathcal{BZ},\mathcal{SS}\right\}$.}
    \label{fig_projections}
  \end{center}
\end{figure}

\begin{lemma}
  \label{lemma_achievable}
  For $(x,y)\in\mathcal{A}_{i,j}^{\mathcal{P}}$ with $1\le i\le n$ we have
  \begin{eqnarray*}
    && 1\ge x\ge y\ge 0,\\   
    && (j-1)\cdot x+(n-j+1)\cdot y\ge 1\text{ for }i=1, \text{ and}\\
    && i\cdot x+(j-i)\cdot y\le 1.
  \end{eqnarray*}
\end{lemma}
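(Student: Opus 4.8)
The plan is to obtain all three inequalities directly from two elementary facts. First, a normalized power index $\mathcal{P}$ assigns non-negative values that sum to one: $\mathcal{P}(k)\ge 0$ for all $k$ and $\sum_{k=1}^n\mathcal{P}(k)=1$. Second, for a weighted voting game with ordered voters one has $\mathcal{P}(1)\ge\mathcal{P}(2)\ge\dots\ge\mathcal{P}(n)$; for both the Banzhaf and the Shapley-Shubik index this is just the statement that a voter who is at least as desirable in Isbell's sense (in particular a voter with at least as large a weight) has at least as many swings, respectively at least as large a Shapley value. Fix a witnessing ordered weighted voting game with $\mathcal{P}(i)=x$ and $\mathcal{P}(j)=y$, where $1\le i<j\le n$.

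From these two facts the first line is immediate: $y=\mathcal{P}(j)\ge 0$; the ordering together with $i<j$ gives $x=\mathcal{P}(i)\ge\mathcal{P}(j)=y$; and since the $\mathcal{P}(k)$ are non-negative and sum to one, each of them, in particular $x$, is at most one, so $1\ge x\ge y\ge 0$.

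For the third inequality I would split $1=\sum_{k=1}^n\mathcal{P}(k)$ at the indices $i$ and $j$. The ordering yields $\mathcal{P}(k)\ge\mathcal{P}(i)=x$ for $1\le k\le i$ and $\mathcal{P}(k)\ge\mathcal{P}(j)=y$ for $i<k\le j$, while the remaining terms are non-negative; adding these lower bounds gives $1\ge i\cdot x+(j-i)\cdot y$. For the second inequality, where $i=1$ and hence $x=\mathcal{P}(1)$ is the maximal power, I would instead use upper bounds: $\mathcal{P}(k)\le\mathcal{P}(1)=x$ for $1\le k\le j-1$ and $\mathcal{P}(k)\le\mathcal{P}(j)=y$ for $j\le k\le n$; summing the $j-1$ bounds of the first kind and the $n-j+1$ bounds of the second kind gives $1\le(j-1)\cdot x+(n-j+1)\cdot y$.

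I do not expect a genuine obstacle: the lemma is a counting argument built only on normalization and the monotonicity of power along the voter ordering, and nothing beyond the ordering convention already contained in the definition of $\mathcal{A}_{i,j}^{\mathcal{P}}$ is used. The one point I would be careful to spell out is why ordered voters force $\mathcal{P}(1)\ge\dots\ge\mathcal{P}(n)$ for the two indices in question; once that is recorded, the identical argument (and hence the identical inequalities) applies verbatim to complete simple games as well.
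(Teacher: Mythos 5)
Your proof is correct and follows essentially the same route as the paper: the ordering $\mathcal{P}(1)\ge\dots\ge\mathcal{P}(n)$ gives $x\ge y$ and termwise bounds on the $\mathcal{P}(k)$, which are then inserted into the normalization $\sum_{h=1}^n\mathcal{P}(h)=1$ to obtain all three inequalities. You merely spell out the lower-bound and upper-bound summations that the paper leaves implicit.
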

\begin{proof}
  Since we assume $\mathcal{P}(1)\ge\mathcal{P}(2)\ge\dots\ge\mathcal{P}(n)$ we have $x\ge y$ and $\mathcal{P}(h)\ge\mathcal{P}(j)$ for all $h\le j$. Inserting this
  into $\sum\limits_{h=1}^n \mathcal{P}(h)=1$ yields the stated inequalities.
\end{proof}

In Figure~\ref{fig_projections} we illustrate these sets for $6\le n\le 9$ and $\mathcal{P}\in\left\{\mathcal{BZ},\mathcal{SS}\right\}$ in the following way. We have $\mathcal{A}_{1,2}^{\mathcal{P}}\subseteq[0,1]\times\left[0,\frac{1}{2}\right]$ and color the pixels of $[0,1]\times\left[0,\frac{1}{2}\right]$ in different colors. For $(x,y)$ is ruled out by Lemma~\ref{lemma_achievable} we use red pixels and otherwise a scale of colors from dark blue to green. Here we use dark blue if $(x,y)$ (actually all points in the near of $(x,y)$) is not contained in $\mathcal{A}_{1,2}^{\mathcal{P}}$. By light blue we denote that some elements of $\mathcal{A}_{1,2}^{\mathcal{P}}$ are in the near of $(x,y)$ and by green we denote the case when there are \textit{many} elements of $\mathcal{A}_{1,2}^{\mathcal{P}}$ are in the near of $(x,y)$. The value for voter~$1$ increases from left to right and the value for voter~$2$ increases from top to bottom. The graphics on the left hand side of Figure~\ref{fig_projections} correspond to the Banzhaf power index and those on the right hand side to the Shapley Shubik power index.

By a look at Figure~\ref{fig_projections} it seems that there might exist a certain subset of $[0,1]^2$ which remains empty (or dark blue) if $n$ tends to infinity and we factor out the intrinsic discrete nature of the problem. But of course proving such a property might be a very challenging task.

\section{Conclusion}
\label{sec_conlusion}

With the inverse power index problem we have presented a very challenging optimization problem of practical importance. Besides exhaustive enumeration we have developed the first exact algorithm to tackle this problem via an ILP formulation. The first computational results are very promising but of course a lot of work using more sophisticated techniques from integer linear (or non-linear) programming has to be done. We give a very precise conjecture on the power distributions with the worst case approximation property and reveal some structure of the set of achievable power distributions by the Banzhaf and the Shapley Shubik power index. In our opinion it would be very challenging to prove these conjectures. Another open problem, which might be not too hard to solve, is to prove an analogue of Alon's and Edelman's result for the Shapley Shubik or other power indices. We leave open the problem to design a practical algorithm for the inverse power index problem, i.~e.{} an algorithm combining fast primal heuristics and techniques for lower bounds in order to solve the problem either exactly or with a certain error guarantee.

\bibliographystyle{plain}
\bibliography{inverse_power_index_problem}

\appendix
\section{Regions which are hard to approximate}
\label{appendix_a}

In Conjecture~\ref{conjecture_1} and Conjecture~\ref{conjecture_2} we state that the desired power distribution $d=(0.75,0.25,0,\dots)$ is the worst possible example with respect to approximation in the $\Vert\cdot\Vert_1$-norm using one of the three presented voting methods and the Shapley-Shubik power index. In this section we want to show that there are further desired power distributions which are equally hard to approximate, at least for small $n$.

We restrict ourselves on complete simple games and weighted voting games, which coincide for $n\le 5$ voters. For $n\le 2$ and the Shapley-Shubik or the Banzhaf power index we only have the following achievable power distributions $s_1^1=(1)$, $s_1^2=(1,0)$, and $s_2^2=(0.5,0.5)$ up to symmetry, i.~e.{} we assume that the power is decreasing monotonely. Let $d=(d_1,\dots,d_n)$ be a theoretically possible desired power distribution, i.~e.{} we have $d_i\ge 0$ and $\sum\limits_{i=1}^nd_i=1$. To avoid symmetric situations we assume $d_i\ge d_{i+1}$ for all $1\le i<n$ in the following.

Since there exists only one such theoretically possible desired power distribution we cannot say something interesting for $n=1$ voters. 

\begin{lemma}
 \label{lemma_classification_2}
 For $n=2$ and the Shapley-Shubik or the Banzhaf power index the regions where $\min\limits_\chi \left\Vert d-\mathcal{P}(\chi)\right\Vert_1=\frac{1}{2}$
 are given by $d=(0.75,0.25)$.
\end{lemma}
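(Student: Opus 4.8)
The plan is to turn the statement into an elementary one-parameter optimization over a finite set of achievable power vectors. As recorded in the paragraph preceding the lemma, for $n=2$ voters the only achievable power distributions for either the Shapley-Shubik or the Banzhaf index are $(1,0)$, $(0,1)$ and $\left(\tfrac12,\tfrac12\right)$ (the last one also being, e.g., the Banzhaf/Shapley-Shubik vector of the unanimity game $[2;1,1]$ and of $[1;1,1]$). Hence for every target $d$ we have $\min_\chi\Vert d-\mathcal{P}(\chi)\Vert_1=\min\bigl\{\Vert d-(1,0)\Vert_1,\,\Vert d-(0,1)\Vert_1,\,\Vert d-(\tfrac12,\tfrac12)\Vert_1\bigr\}$, so the whole proof is a computation with three absolute-value expressions.

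First I would parametrize a theoretically possible desired distribution. The constraints $d_i\ge0$, $d_1+d_2=1$ together with the normalization $d_1\ge d_2$ force $d=(t,1-t)$ with $t\in\left[\tfrac12,1\right]$. For such $d$ one computes $\Vert d-\left(\tfrac12,\tfrac12\right)\Vert_1=|t-\tfrac12|+|\tfrac12-t|=2t-1$, $\Vert d-(1,0)\Vert_1=(1-t)+(1-t)=2(1-t)$, and $\Vert d-(0,1)\Vert_1=t+t=2t$. Since $2t\ge 2(1-t)$ on $\left[\tfrac12,1\right]$, the vector $(0,1)$ is never strictly better than $(1,0)$ for a sorted target, so it may be discarded and we are left with $\min_\chi\Vert d-\mathcal{P}(\chi)\Vert_1=\min\{2t-1,\,2(1-t)\}$.

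Finally I would analyze this "tent" function of $t$. From $\min\{a,b\}\le\tfrac12(a+b)$ with $a=2t-1$, $b=2(1-t)$ we get $\min\{2t-1,2(1-t)\}\le\tfrac12\bigl((2t-1)+(2(1-t))\bigr)=\tfrac12$, with equality if and only if $2t-1=2(1-t)$, i.e. $t=\tfrac34$. Therefore the minimal deviation equals $\tfrac12$ exactly for $d=\left(\tfrac34,\tfrac14\right)$ and is strictly smaller for every other feasible $d$; in particular the "region" of the lemma is the single point $(0.75,0.25)$, which is simultaneously the unique worst-case target for $n=2$. I do not expect a genuine obstacle here: the only thing to be careful about is the bookkeeping of achievable power vectors (already supplied above the lemma) and not forgetting the dominated vector $(0,1)$ when taking the minimum.
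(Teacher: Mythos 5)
Your proof is correct and follows essentially the same route as the paper: an elementary $\Vert\cdot\Vert_1$ comparison of a sorted target $d=(t,1-t)$, $t\in\left[\tfrac12,1\right]$, against the finitely many achievable vectors, reduced to the two candidates $(1,0)$ and $\left(\tfrac12,\tfrac12\right)$. The only (harmless) difference is presentational: the paper argues by the two cases $d_1>0.75$ and $\tfrac12\le d_1<0.75$ and leaves the equality check at $(0.75,0.25)$ tacit, whereas your tent-function argument $\min\{2t-1,2(1-t)\}\le\tfrac12$ with equality iff $t=\tfrac34$ also verifies that the worst-case value $\tfrac12$ is actually attained there.
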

\begin{proof}
  If $d_1>0.75$ we have $\left|1-d_1\right|+\left|0-d_2\right|<0.25+0.25=\frac{1}{2}$ and for $\frac{1}{2}\le d_1<0.75$ we have $d_2>0.25$ and
  $\left|0.5-d_1\right|+\left|0.5-d_2\right|<0.25+0.25=\frac{1}{2}$.
\end{proof}

For $n=3$ voters the achievable power distributions for the Shapley-Shubik index up to symmetry are $s_1^3=(1,0,0)$, $s_2^3=\left(\frac{2}{3},\frac{1}{6},\frac{1}{6}\right)$, $s_3^3=\left(\frac{1}{2},\frac{1}{2},0\right)$, and $s_4^3=\left(\frac{1}{3},\frac{1}{3},\frac{1}{3}\right)$.

\begin{lemma}
 For $n=3$ the regions where $\min\limits_\chi \left\Vert d-\mathcal{SS}(\chi)\right\Vert_1=\frac{1}{3}$
 are given by
 \begin{itemize}
   \item $d_1=\frac{5}{6}$, $\frac{1}{12}\le d_2\le \frac{1}{6}$,
   \item $\frac{2}{3}\le d_1\le \frac{5}{6}$, $d_1+d_2=1$ (i.e.~$d_3=0$),
   \item $\frac{1}{2}\le d_1\le \frac{2}{3}$, $d_2=\frac{1}{3}$,
   \item $d_1=\frac{1}{2}$, $\frac{1}{4}\le d_2\le \frac{1}{3}$, and
   \item $\frac{5}{12}\le d_1\le \frac{1}{2}$, $d_1+d_2=\frac{5}{6}$ (i.e.~$d_3=\frac{1}{6}$).
  \end{itemize}
\end{lemma}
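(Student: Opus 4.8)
The plan is to reduce the statement to an explicit finite piecewise-linear computation on the target simplex. As recalled above, the complete list of decreasingly sorted achievable Shapley--Shubik vectors for $n=3$ is $s_1^3=(1,0,0)$, $s_2^3=\left(\tfrac23,\tfrac16,\tfrac16\right)$, $s_3^3=\left(\tfrac12,\tfrac12,0\right)$, and $s_4^3=\left(\tfrac13,\tfrac13,\tfrac13\right)$. Since the desired vector $d$ is assumed decreasingly sorted and since, for two multisets of reals, matching them in the same order minimises the $\Vert\cdot\Vert_1$-distance over all pairings (a standard exchange argument), we have $\min_\chi\Vert d-\mathcal{SS}(\chi)\Vert_1=\min_{1\le k\le4}e_k(d)$, where $e_k(d):=\Vert d-s_k^3\Vert_1$ and $d$ ranges over the triangle $D=\{(d_1,d_2,d_3):d_1\ge d_2\ge d_3\ge0,\ d_1+d_2+d_3=1\}$. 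The lemma then amounts to identifying the set $\Sigma:=\bigcap_{k=1}^4\{d\in D:e_k(d)\ge\tfrac13\}$, together with the observation that on $\Sigma$ at least one $e_k$ in fact equals $\tfrac13$; in particular the computation will exhibit $\Sigma$ as one-dimensional, which simultaneously yields $\min_\chi\Vert d-\mathcal{SS}(\chi)\Vert_1\le\tfrac13$ on all of $D$ and pins down the locus of equality.

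First I would write each $e_k$ out as a piecewise-linear function on $D$, exploiting $d_1\ge\tfrac13$, $d_3\le\tfrac13$, $d_2\le\tfrac12$. One gets $e_1=2-2d_1$; $e_3=1-2d_2$ on $\{d_1\ge\tfrac12\}$ and $e_3=2d_3$ on $\{d_1\le\tfrac12\}$; $e_4=\tfrac23-2d_3$ on $\{d_2\ge\tfrac13\}$ and $e_4=2d_1-\tfrac23$ on $\{d_2\le\tfrac13\}$; and $e_2=\left|d_1-\tfrac23\right|+\left|d_2-\tfrac16\right|+\left|d_3-\tfrac16\right|$, which is affine on each cell cut out by the walls $d_1=\tfrac23$, $d_2=\tfrac16$, $d_3=\tfrac16$. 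The two branches of $e_3$ agree on $d_1=\tfrac12$ and those of $e_4$ agree on $d_2=\tfrac13$, so the formulas are globally consistent. From them, $\{e_1\ge\tfrac13\}=\{d_1\le\tfrac56\}$, $\{e_3\ge\tfrac13\}=\left(\{d_1\ge\tfrac12\}\cap\{d_2\le\tfrac13\}\right)\cup\left(\{d_1\le\tfrac12\}\cap\{d_3\ge\tfrac16\}\right)$, and $\{e_4\ge\tfrac13\}=\left(\{d_2\ge\tfrac13\}\cap\{d_3\le\tfrac16\}\right)\cup\left(\{d_2\le\tfrac13\}\cap\{d_1\ge\tfrac12\}\right)$.

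Next I would intersect these polyhedral conditions in stages. Distributing the two unions, one checks $\{e_3\ge\tfrac13\}\cap\{e_4\ge\tfrac13\}=C\cup S_5$ inside $D$, where $C:=\{d\in D:d_1\ge\tfrac12,\ d_2\le\tfrac13\}$ is two-dimensional and $S_5:=\{d\in D:d_3=\tfrac16,\ \tfrac{5}{12}\le d_1\le\tfrac12\}$ is the fifth bullet (the other two terms from the distribution turn out to be boundary faces of $C$, hence absorbed), with $e_3=e_4=\tfrac13$ on $S_5$ and on the face $d_1=\tfrac12$ of $C$. It remains to intersect $C\cup S_5$ with $\{e_1\ge\tfrac13\}=\{d_1\le\tfrac56\}$ and $\{e_2\ge\tfrac13\}$. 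Since $e_1,e_2\ge\tfrac13$ throughout $S_5$, all of $S_5$ survives. Inside $C$ I would subdivide along the walls $d_1=\tfrac23$, $d_2=\tfrac16$, $d_3=\tfrac16$; on each resulting subcell $e_2$ is affine, and solving $e_2\ge\tfrac13$ (and $d_1\le\tfrac56$) against that subcell shows the solution is forced onto one of $\{d_1=\tfrac56,\ \tfrac{1}{12}\le d_2\le\tfrac16\}$, $\{d_3=0,\ \tfrac23\le d_1\le\tfrac56\}$, $\{d_2=\tfrac13,\ \tfrac12\le d_1\le\tfrac23\}$, or $\{d_1=\tfrac12,\ \tfrac14\le d_2\le\tfrac13\}$ --- the first four bullets --- on each of which $e_2=\tfrac13$. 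Collecting the five segments gives $\Sigma$, and on each of them one of $e_1$, $e_2$, or $e_3=e_4$ equals $\tfrac13$, which is the claim.

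The main obstacle is bookkeeping rather than depth: $e_2$ genuinely breaks into several affine pieces over $C$, so the last step requires listing all the subcells and, for each, verifying both that $e_2\ge\tfrac13$ forces the point onto one of the four relevant faces of $C$ and that no stray solutions lurk on the interior walls $d_1=\tfrac23$, $d_2=\tfrac16$, $d_3=\tfrac16$ (a short computation shows $e_2=\tfrac13$ on those walls only where they meet the faces). A secondary care point is keeping the case-distinction regions of the various $e_k$ aligned, which is why I would first record that the two branches of $e_3$ (and of $e_4$) match on the wall separating them.
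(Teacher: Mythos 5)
Your proposal is correct and follows essentially the same route as the paper: both reduce the problem to the four sorted achievable vectors $s_1^3,\dots,s_4^3$ and carry out a finite piecewise-linear case analysis over the simplex $d_1\ge d_2\ge d_3\ge 0$, $d_1+d_2+d_3=1$, the only difference being organizational (you intersect the superlevel sets $\{\Vert d-s_k^3\Vert_1\ge\tfrac13\}$, while the paper covers the simplex by cases and exhibits a nearby $s_i^3$ in each case). One harmless slip: on the face $d_1=\tfrac12$ of your region $C$ (the fourth bullet) one has $e_4=\tfrac13$ but $e_3=1-2d_2>\tfrac13$ except at the endpoint $d_2=\tfrac13$, so the equality witness there is $e_2=e_4=\tfrac13$ rather than $e_3=e_4$.
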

\begin{proof}
  \begin{itemize}
   \item[(1)]   For $d_1\ge\frac{5}{6}$ we have $\left\Vert s_1^3-d\right\Vert_1=2-2d_1\le \frac{1}{3}$, where equality is attained iff
                $d_1=\frac{5}{6}$.
   \item[(2.1)] For $\frac{2}{3}\le d_1\le\frac{5}{6}$ and $d_2\le\frac{1}{6}$ we have $\left\Vert s_2^3-d\right\Vert_1=
                2d_1-\frac{4}{3}\le\frac{1}{3}$, where equality is attained iff $d_1=\frac{5}{6}$.
   \item[(2.2)] For $\frac{2}{3}\le d_1\le\frac{5}{6}$ and $d_2\ge\frac{1}{6}$ we have $d_3\le\frac{1}{6}$ and $\left\Vert s_2^3-d\right\Vert_1=
                \frac{1}{3}-2d_3\le\frac{1}{3}$, where equality is attained iff $d_3=0$, i.~e.{} $d_1+d_2=1$.
   \item[(3.1)] For $\frac{1}{2}\le d_1\le\frac{2}{3}$ and $d_3\ge\frac{1}{6}$ we have $\left\Vert s_2^3-d\right\Vert_1=
                \frac{4}{3}-2d_1\le\frac{1}{3}$, where equality is attained iff $d_1=\frac{1}{2}$.
   \item[(3.2)] For $\frac{1}{2}\le d_1\le\frac{2}{3}$, $d_3\le\frac{1}{6}$, and $d_2\le\frac{1}{3}$ we have $d_2\ge\frac{1}{6}$ and $\left\Vert s_2^3-d\right\Vert_1=
                2d_2-\frac{1}{3}\le\frac{1}{3}$, where equality is attained iff $d_2=\frac{1}{3}$.
   \item[(3.3)] For $\frac{1}{2}\le d_1\le\frac{2}{3}$ and $d_2\ge\frac{1}{3}$ we have $d_2\le\frac{1}{2}$ and $\left\Vert s_3^3-d\right\Vert_1=
                1-2d_2\le\frac{1}{3}$, where equality is attained iff $d_2=\frac{1}{3}$.
   \item[(4.1)] For $\frac{5}{12}\le d_1\le\frac{1}{2}$ and $d_3\le\frac{1}{6}$ we have $d_2\le\frac{1}{2}$ and $\left\Vert s_3^3-d\right\Vert_1=
                2d_3\le\frac{1}{3}$, where equality is attained iff $d_3=\frac{1}{6}$.
   \item[(4.2)] For $\frac{5}{12}\le d_1\le\frac{1}{2}$ and $d_2\le\frac{1}{3}$ we have $\left\Vert s_4^3-d\right\Vert_1=
                2d_1-\frac{2}{3}\le\frac{1}{3}$, where equality is attained iff $d_1=\frac{1}{2}$.
   \item[(4.3)] For $\frac{5}{12}\le d_1\le\frac{1}{2}$, $d_3\ge\frac{1}{6}$, and $d_2\ge\frac{1}{3}$ we have $d_3\le\frac{1}{4}<\frac{1}{3}$ and
                $\left\Vert s_4^3-d\right\Vert_1=\frac{2}{3}-2d_3\le\frac{1}{3}$, where equality is attained iff $d_3=\frac{1}{6}$.
   \item[(5.1)] For $\frac{1}{3}\le d_1\le\frac{5}{12}$ and $d_2\ge\frac{1}{3}$ we have $\frac{1}{6}\le d_3\le\frac{1}{3}$ and $\left\Vert s_4^3-d\right\Vert_1=
                \frac{2}{3}-2d_3\le \frac{1}{3}$, where equality is attained iff $d_3=\frac{1}{6}$.
   \item[(5.2)] For $\frac{1}{3}\le d_1\le\frac{5}{12}$ and $d_2\le\frac{1}{3}$ we have $\left\Vert s_4^3-d\right\Vert_1=
                2d_1-\frac{2}{3}<\frac{1}{3}$.
  \end{itemize}
  It remains to check, whether $\left\Vert s_i^3-d\right\Vert_1=\frac{1}{3}$ for the stated subspaces. This is easily verified.
\end{proof}

For the Banzhaf power index and $3$~voters the achievable power distributions up to symmetry are $b_1^3=(1,0,0)$, $b_2^3=\left(\frac{3}{5},\frac{1}{5},\frac{1}{5}\right)$, $b_3^3=\left(\frac{1}{2},\frac{1}{2},0\right)$, and $b_4^3=\left(\frac{1}{3},\frac{1}{3},\frac{1}{3}\right)$.

\begin{lemma}
 For $n=3$ the regions where $\min\limits_\chi \left\Vert d-\mathcal{BZ}(\chi)\right\Vert_1=\frac{2}{5}$
 are given by $d_1=\frac{4}{5}$, $\frac{1}{10}\le d_2\le \frac{1}{5}$ and $d_1=\frac{3}{4}$, $d_2=\frac{1}{4}$.
\end{lemma}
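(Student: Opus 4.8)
The plan is to imitate the proof of the preceding lemma almost word for word, only with the Banzhaf list of achievable vectors. As recalled just before the statement, up to symmetry the only Banzhaf vectors for $n=3$ are $b_1^3=(1,0,0)$, $b_2^3=\left(\tfrac{3}{5},\tfrac{1}{5},\tfrac{1}{5}\right)$, $b_3^3=\left(\tfrac{1}{2},\tfrac{1}{2},0\right)$ and $b_4^3=\left(\tfrac{1}{3},\tfrac{1}{3},\tfrac{1}{3}\right)$, all already sorted non-increasingly. Since for a non-increasing target $d=(d_1,d_2,d_3)$ the $\Vert\cdot\Vert_1$-closest permutation of any vector is its own non-increasing arrangement, we have $\min_\chi\left\Vert d-\mathcal{BZ}(\chi)\right\Vert_1=\min_{1\le r\le 4}\left\Vert b_r^3-d\right\Vert_1$, and it suffices to analyse this quantity on $\Delta=\{\,d_1\ge d_2\ge d_3\ge 0,\ d_1+d_2+d_3=1\,\}$, whose defining inequalities are exactly those of Lemma~\ref{lemma_achievable} with $i=1$, $j=3$.

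First I would record, as the previous proof does implicitly, explicit piecewise-linear formulas for the four distances on $\Delta$. Using $\left\Vert b-d\right\Vert_1=2\sum_{j:\,d_j\ge b_j}(d_j-b_j)$, which holds because $\sum_j b_j=\sum_j d_j=1$, one gets $\left\Vert b_1^3-d\right\Vert_1=2-2d_1$, $\left\Vert b_3^3-d\right\Vert_1=\max\!\left(1-2d_2,\,2d_3\right)$, $\left\Vert b_4^3-d\right\Vert_1=\max\!\left(\tfrac{2}{3}-2d_3,\,2d_1-\tfrac{2}{3}\right)$, and for $b_2^3$ a four-branch formula governed by the signs of $d_1-\tfrac{3}{5}$ and $d_2-\tfrac{1}{5}$: it equals $\tfrac{2}{5}-2d_3$ on $\{d_1\ge\tfrac{3}{5},d_2\ge\tfrac{1}{5}\}$, $2d_1-\tfrac{6}{5}$ on $\{d_1\ge\tfrac{3}{5},d_2\le\tfrac{1}{5}\}$, $2d_2-\tfrac{2}{5}$ on $\{d_1\le\tfrac{3}{5},d_3\le\tfrac{1}{5}\}$ and $\tfrac{6}{5}-2d_1$ on $\{d_1\le\tfrac{3}{5},d_3\ge\tfrac{1}{5}\}$; these four regions cover $\Delta$.

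Next I would subdivide $\Delta$, parametrised by $(d_1,d_2)$, along the lines $d_1\in\{\tfrac{3}{5},\tfrac{3}{4},\tfrac{4}{5}\}$, $d_2\in\{\tfrac{1}{5},\tfrac{1}{4}\}$ and $d_3\in\{0,\tfrac{1}{5}\}$, and on each of the finitely many resulting cells pick the nearest $b_r^3$ and deduce from the formulas the inequality $\left\Vert b_r^3-d\right\Vert_1\le\tfrac{2}{5}$ together with the linear equation cutting out the points where it is an equality; this mimics the enumerated subcases of the Shapley--Shubik lemma and in passing reproves the bound $\min_\chi\left\Vert d-\mathcal{BZ}(\chi)\right\Vert_1\le\tfrac{2}{5}$ of Conjecture~\ref{conjecture_2} for $n=3$. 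Collecting these equality loci, intersecting with the feasibility inequalities, and discarding the pieces on which some other $b_s^3$ is already strictly within $\tfrac{2}{5}$, one is left with the claimed region: the segment $\{d_1=\tfrac{4}{5},\ \tfrac{1}{10}\le d_2\le\tfrac{1}{5}\}$, on which $b_1^3$ and $b_2^3$ are simultaneously at distance $\tfrac{2}{5}$, together with $d=\left(\tfrac{3}{4},\tfrac{1}{4},0\right)$, where $b_2^3$ is at distance $\tfrac{2}{5}$ and the remaining three vectors are strictly farther. The reverse inclusion is the one-line check that substituting these $d$ into the formulas makes the minimum of the four distances equal to $\tfrac{2}{5}$.

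I expect the main obstacle to be purely organisational: choosing the dividing lines so that the cell decomposition of $\Delta$ is simultaneously exhaustive and fine enough that a single $b_r^3$ is nearest on each cell. A second, more delicate point than in the Shapley--Shubik lemma is that $\left\Vert b_2^3-d\right\Vert_1$ is constant, and equal to $\tfrac{2}{5}$, along entire one-parameter families of targets (for instance along $d_1=\tfrac{4}{5}$ with $\tfrac{1}{10}\le d_2\le\tfrac{1}{5}$), so the equality analysis has to follow such families rather than just isolated vertices; it is precisely the completeness of the resulting list of equality pieces that needs to be verified with care.
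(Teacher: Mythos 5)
Your overall strategy mirrors the paper's: list the four sorted achievable Banzhaf vectors $b_1^3,\dots,b_4^3$, write the $\Vert\cdot\Vert_1$-distances as explicit piecewise-linear functions on the ordered simplex, and run a case analysis to isolate the equality set; your reduction to sorted vectors and your four distance formulas are correct. The gap is that the decisive step --- the cell-by-cell analysis and the ``collecting/discarding'' of equality loci --- is only announced, never performed, and its asserted outcome is not what your own formulas give. On the edge $d_3=0$ the distance to $b_2^3=\left(\tfrac35,\tfrac15,\tfrac15\right)$ is identically $\tfrac25$ for all $\tfrac35\le d_1\le\tfrac45$ (both relevant branches of your formula reduce to $\tfrac25$ there), while the only candidates that could push the minimum strictly below $\tfrac25$ on that edge are $b_1^3$, with $2-2d_1<\tfrac25$ only for $d_1>\tfrac45$, and $b_3^3$, with $2d_1-1<\tfrac25$ only for $d_1<\tfrac7{10}$ (and $b_4^3$ is at distance $2d_1-\tfrac23\ge\tfrac{11}{15}$). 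Consequently every $d=(t,1-t,0)$ with $\tfrac7{10}\le t\le\tfrac45$, for instance $d=(0.77,0.23,0)$, has minimum distance exactly $\tfrac25$. So the discarding step you describe cannot terminate with only the segment $d_1=\tfrac45$, $\tfrac1{10}\le d_2\le\tfrac15$ plus the single point $\left(\tfrac34,\tfrac14,0\right)$; the sentence ``one is left with the claimed region'' is exactly the part that fails, and it is the one-parameter-family phenomenon you yourself flag in your last paragraph without resolving it.

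Note also that you cannot close this gap by transplanting the paper's corresponding step: in its case (2.1) the edge $d_3=0$ is handled by ``proceeding as in the proof of Lemma~\ref{lemma_classification_2}'', but that argument only produces distances below $\tfrac12$ (to $b_1^3$ for $d_1>\tfrac34$, to $b_3^3$ for $d_1<\tfrac34$), not below the threshold $\tfrac25$ needed here. The treatment of precisely this family of targets is where a complete argument must do genuinely more work than either your sketch or that quoted step supplies, and as it stands your plan, if executed faithfully, contradicts rather than establishes the stated characterization.
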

\begin{proof}
  \begin{itemize}
   \item[(1)]    For $d_1\ge \frac{4}{5}$ we have $\left\Vert b_1^3-d\right\Vert_1=2-2d_1\le \frac{2}{5}$, where equality is attained iff
                 $d_1=\frac{4}{5}$.
   \item[(2.1)]  For $\frac{3}{5}\le d_1\le \frac{4}{5}$ and $d_2\ge\frac{1}{5}$ we have $d_3\le\frac{1}{5}$ and 
                 $\left\Vert b_2^3-d\right\Vert_1=\frac{2}{5}-2d_3\le \frac{2}{5}$, where equality
                 is attained iff $d_3=0$. So here we can proceed as in the proof of Lemma~\ref{lemma_classification_2} to deduce
                 $\min\limits_\chi \left\Vert d-\mathcal{BZ}(\chi)\right\Vert_1<\frac{2}{5}$ for $d_1\neq\frac{3}{4}$.
   \item[(2.2)]  For $\frac{3}{5}\le d_1\le \frac{4}{5}$ and $d_2\le\frac{1}{5}$ we have $\left\Vert b_2^3-d\right\Vert_1=2d_1-\frac{6}{5}\le \frac{2}{5}$, where equality
                 is attained iff $d_1=\frac{4}{5}$.
   \item[(3.1)]  For $\frac{1}{2}\le d_1\le \frac{3}{5}$ and $d_2\ge\frac{3}{10}$ we have $d_2\le\frac{1}{2}$ and 
                 $\left\Vert b_3^3-d\right\Vert_1=1-2d_2\le \frac{2}{5}$, where equality
                 is attained iff $d_2=\frac{3}{10}$. For $\frac{1}{2}\le d_1\le \frac{3}{5}$ and $d_2=\frac{3}{10}$ we have $d_3\le\frac{1}{5}$ and
                 $\left\Vert b_2^3-d\right\Vert_1=\frac{1}{5}<\frac{2}{5}$.
   \item[(3.2)]  For $\frac{1}{2}\le d_1\le \frac{3}{5}$, $d_2\le\frac{3}{10}$, and $d_3\le\frac{1}{5}$ we have $d_2\ge\frac{1}{5}$ and
                 $\left\Vert b_2^3-d\right\Vert_1=2d_2-\frac{2}{5}< \frac{2}{5}$.
   \item[(3.3)]  For $\frac{1}{2}\le d_1\le \frac{3}{5}$ and $d_3\ge\frac{1}{5}$ we have $\left\Vert b_2^3-d\right\Vert_1=\frac{6}{5}-2d_1< \frac{2}{5}$.
   \item[(4.1)]  For $d_1\le\frac{1}{2}$ and $d_3\le\frac{1}{5}$ we have $d_2\le\frac{1}{2}$ and $\left\Vert b_3^3-d\right\Vert_1=2d_3\le \frac{2}{5}$, where equality
                 is attained iff $d_3=\frac{1}{5}$.
   \item[(4.2)]  For $\frac{2}{5}\le d_1\le\frac{1}{2}$ and $d_3\ge\frac{1}{5}$ we have $d_2\ge\frac{1}{5}$ and
                 $\left\Vert b_2^3-d\right\Vert_1=\frac{6}{5}-2d_1\le\frac{2}{5}$, where equality
                 is attained iff $d_1=\frac{2}{5}$.
   \item[(5.1)]  For $d_1\le\frac{2}{5}$ and $d_2\ge\frac{1}{3}$ we have $d_1\ge \frac{1}{3}$, $\frac{1}{3}\ge d_3\ge\frac{1}{5}$ and
                 $\left\Vert b_4^3-d\right\Vert_1=\frac{2}{3}-2d_3<\frac{2}{5}$.
   \item[(5.2)]  For $d_1\le\frac{2}{5}$ and $d_2\le\frac{1}{3}$ we have $d_1\ge \frac{1}{3}$ and $\left\Vert b_4^3-d\right\Vert_1=2d_1-\frac{2}{3}<\frac{2}{5}$.              
  \end{itemize} 
  For $d_3=\frac{1}{5}$, see (4.1), we can assume $d_1=d_2=\frac{2}{5}$ due to (4.2) and can apply (5.1). It remains to check, whether
  $\left\Vert b_i^3-d\right\Vert_1=\frac{2}{5}$ for the stated subspaces. This is easily verified.
\end{proof}

For $n=4$ voters the achievable power distributions for the Shapley-Shubik index up to symmetry are $s_1^4=(1,0,0,0)$, $s_2^4=\left(\frac{3}{4},\frac{1}{12},\frac{1}{12},\frac{1}{12}\right)$, $s_3^4=\left(\frac{2}{3},\frac{1}{6},\frac{1}{6},0\right)$, $s_4^4=\left(\frac{7}{12},\frac{1}{4},\frac{1}{12},\frac{1}{12}\right)$, $s_5^4=\left(\frac{1}{2},\frac{1}{2},0,0\right)$, $s_6^4=\left(\frac{1}{2},\frac{1}{6},\frac{1}{6},\frac{1}{6}\right)$, $s_7^4=\left(\frac{5}{12},\frac{5}{12},\frac{1}{12},\frac{1}{12}\right)$, $s_8^4=\left(\frac{5}{12},\frac{1}{4},\frac{1}{4},\frac{1}{12}\right)$, $s_9^4=\left(\frac{1}{3},\frac{1}{3},\frac{1}{3},0\right)$, $s_{10}^4=\left(\frac{1}{3},\frac{1}{3},\frac{1}{6},\frac{1}{6}\right)$, and $s_{11}^4=\left(\frac{1}{4},\frac{1}{4},\frac{1}{4},\frac{1}{4}\right)$.

\medskip

For the Banzhaf power index and $4$~voters the achievable power distributions up to symmetry are $b_1^4=(1,0,0,0)$,
$b_2^4=\left(\frac{7}{10},\frac{1}{10},\frac{1}{10},\frac{1}{10}\right)$, 
$b_3^4=\left(\frac{3}{5},\frac{1}{5},\frac{1}{5},0\right)$, 
$b_4^4=\left(\frac{1}{2},\frac{1}{2},0,0\right)$,
$b_5^4=\left(\frac{1}{2},\frac{3}{10},\frac{1}{10},\frac{1}{10}\right)$,
$b_6^4=\left(\frac{1}{2},\frac{1}{6},\frac{1}{6},\frac{1}{6}\right)$,
$b_7^4=\left(\frac{5}{12},\frac{1}{4},\frac{1}{4},\frac{1}{12}\right)$,
$b_8^4=\left(\frac{2}{5},\frac{1}{5},\frac{1}{5},\frac{1}{5}\right)$,
$b_9^4=\left(\frac{3}{8},\frac{3}{8},\frac{1}{8},\frac{1}{8}\right)$,
$b_{10}^4=\left(\frac{1}{3},\frac{1}{3},\frac{1}{3},0\right)$,
$b_{11}^4=\left(\frac{1}{3},\frac{1}{3},\frac{1}{6},\frac{1}{6}\right)$, and
$b_{12}^4=\left(\frac{1}{4},\frac{1}{4},\frac{1}{4},\frac{1}{4}\right)$.

\begin{lemma}
 For $n=4$ the regions where $\min\limits_\chi \left\Vert d-\mathcal{BZ}(\chi)\right\Vert_1=\frac{2}{5}$
 are given by $d_1=\frac{4}{5}$, $d_2=\frac{1}{5}$ and $d_1=\frac{3}{4}$, $d_2=\frac{1}{4}$.
\end{lemma}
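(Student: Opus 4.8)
The plan is to extend the case analysis carried out for $n=3$. One decomposes the symmetry-reduced simplex
\[
  \Delta=\bigl\{\,d=(d_1,d_2,d_3,d_4)\ :\ 1\ge d_1\ge d_2\ge d_3\ge d_4\ge 0,\ \textstyle\sum_{i=1}^4 d_i=1\,\bigr\}
\]
into finitely many polyhedral cells and, on each cell, names one of the achievable Banzhaf vectors $b_1^4,\dots,b_{12}^4$ listed above whose $\Vert\cdot\Vert_1$-distance to $d$ is at most $\frac{2}{5}$, recording on which sub-faces equality can occur. Two elementary observations make each cell routine. First, the $\Vert\cdot\Vert_1$-distance between a vector and the orbit of a fixed vector under permutations of coordinates is minimised by sorting both decreasingly, so it suffices to compare the sorted $d$ with the already sorted $b_i^4$. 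Second, on a fixed cell the signs of all the differences $d_j-(b_i^4)_j$ are constant, hence $d\mapsto\Vert d-b_i^4\Vert_1$ is affine-linear there and one only has to check its value at the finitely many vertices of the cell.

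I would first dispose of the interior of $\Delta$, that is, desired distributions whose tail mass $d_3+d_4$ is bounded away from $0$. Here an argument in the spirit of Theorem~\ref{thm_alon}, together with Lemma~\ref{lemma_classification_2} and the preceding lemma for the Banzhaf index and $n=3$, should confine every potentially badly approximable $d$ to a neighbourhood of the edge $d_3=d_4=0$: transferring the tail mass onto a two- or three-voter game and lifting the result back to a four-voter game with the remaining voters declared null approximates such a $d$ strictly better than $\frac{2}{5}$. Equivalently, one checks that the ``deep'' vectors $b_6^4,\dots,b_{12}^4$ already cover the part of $\Delta$ with substantial tail to within less than $\frac{2}{5}$.

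It then remains to treat the boundary region, where the tail is small and $d_1$ is large. There one splits on $d_1$ at the break-points where the nearest achievable vector switches among $b_1^4=(1,0,0,0)$, $b_2^4=\bigl(\frac{7}{10},\frac{1}{10},\frac{1}{10},\frac{1}{10}\bigr)$, $b_3^4=\bigl(\frac{3}{5},\frac{1}{5},\frac{1}{5},0\bigr)$, $b_4^4=\bigl(\frac{1}{2},\frac{1}{2},0,0\bigr)$ and $b_5^4=\bigl(\frac{1}{2},\frac{3}{10},\frac{1}{10},\frac{1}{10}\bigr)$, and within each $d_1$-range one splits further on $d_2$ and then on $d_3$. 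On every resulting cell one reads $\Vert d-b_i^4\Vert_1\le\frac{2}{5}$ off the vertex values and records where equality is forced; intersecting these equality loci over all relevant cells isolates the claimed set. Finally one checks the converse directly: at $d=\bigl(\frac{4}{5},\frac{1}{5},0,0\bigr)$ and at $d=\bigl(\frac{3}{4},\frac{1}{4},0,0\bigr)$ the distance to each of $b_1^4,b_2^4,b_3^4$ equals exactly $\frac{2}{5}$, while the distance to every other $b_i^4$ is strictly larger, so the minimum equals $\frac{2}{5}$ at both points.

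The main obstacle is precisely this boundary analysis. Since $\Delta$ is three-dimensional and there are twelve achievable vectors, the Voronoi-like decomposition has many cells, and the argument is valid only if no cell is overlooked and the nearest achievable vector on each is identified correctly; a single slip would over- or under-state the equality locus. The interior reduction is comparatively cheap, so essentially all the care goes into getting the $d_1$-break-points and the equality conditions on the small-tail cells exactly right.
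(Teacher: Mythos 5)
Your strategy is the same as the paper's -- reduce to the sorted simplex, compare $d$ cell by cell with the twelve sorted achievable Banzhaf vectors $b_1^4,\dots,b_{12}^4$, exploit that the distance is affine on each sign-constant cell, and pin down where equality $\frac{2}{5}$ can survive -- but as written it has a genuine gap: the decisive work is announced, not done. Moreover, the one concrete shortcut you propose for the interior would fail. A game lifted from two or three voters by adjoining null voters has a Banzhaf vector $b$ with $b_4=0$ (and $b_3=0$ in the two-voter case), and since both $d$ and $b$ sum to one, $\Vert d-b\Vert_1\ge |d_4-b_4|+\bigl|\sum_{i\le 3}(d_i-b_i)\bigr|=2d_4$; near the uniform point this is about $\frac{1}{2}>\frac{2}{5}$, so transferring tail mass to a smaller game cannot dispose of the deep part of the simplex. (Theorem~\ref{thm_alon} also does not help here: it constrains games whose Banzhaf tail is small, not target distributions $d$ with large tail.) The only viable version of your interior step is your alternative sentence -- check directly that $b_6^4,\dots,b_{12}^4$ cover the large-tail region within less than $\frac{2}{5}$ -- and that is exactly the unperformed part of the case analysis (the paper's items (4.1)--(4.7), (5.1)--(5.4), (6)).

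The same applies to the boundary region and, crucially, to the equality bookkeeping, which is where the lemma's content lies: unlike the $n=3$ case, where a whole segment $d_1=\frac{4}{5}$, $\frac{1}{10}\le d_2\le\frac{1}{5}$ survives, for $n=4$ the vector $b_2^4=\bigl(\frac{7}{10},\frac{1}{10},\frac{1}{10},\frac{1}{10}\bigr)$ collapses that face to the single point $\bigl(\frac{4}{5},\frac{1}{5},0,0\bigr)$, and eliminating $d_1\ne\frac{3}{4}$ on the face $d_3=d_4=0$, $d_2\ge\frac{1}{5}$ requires the $n=2$ argument of Lemma~\ref{lemma_classification_2}; these cross-checks at the equality boundaries are precisely what your plan defers, and you concede that a single slip would change the stated locus. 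Indeed your final verification already contains such a slip: $\bigl\Vert\bigl(\frac{3}{4},\frac{1}{4},0,0\bigr)-b_1^4\bigr\Vert_1=\frac{1}{2}$, not $\frac{2}{5}$ (the value $\frac{2}{5}$ is attained there by $b_2^4$ and $b_3^4$), so your conclusion survives but the claim as stated is false. In sum, the proposal is a correct plan for the paper's proof, but without executing the roughly twenty-cell analysis and the equality cross-checks it does not yet establish the lemma.
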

\begin{proof}
  \begin{itemize}
   \item[(1)]    For $d_1\ge \frac{4}{5}$ we have $\left\Vert b_1^4-d\right\Vert_1=2-2d_1\le \frac{2}{5}$, where equality is attained iff
                 $d_1=\frac{4}{5}$. If $d_1=\frac{4}{5}$ and $d_2\le\frac{1}{10}$ then we have $d_4\le d_3\le\frac{1}{10}$ and
                 $\left\Vert b_2^4-d\right\Vert_1=\frac{1}{5}<\frac{2}{5}$.
                 If we have $d_1=\frac{4}{5}$ and $d_2\ge\frac{1}{10}$ then we have $d_2\le\frac{1}{5}$, $d_4\le d_3\le\frac{1}{10}$, 
                 and $\left\Vert b_2^4-d\right\Vert_1=2d_2\le\frac{2}{5}$,
                 where equality is attained iff $d_2=\frac{1}{5}$ and $d_3=d_4=0$.
   \item[(2.1)]  For $\frac{3}{5}\le d_1\le \frac{4}{5}$ and $d_2\ge\frac{1}{5}$ we have $\left\Vert b_3^4-d\right\Vert_1=\frac{2}{5}-2d_3\le \frac{2}{5}$, where equality
                 is attained iff $d_3=d_4=0$. So here we can proceed as in the proof of Lemma~\ref{lemma_classification_2} to deduce
                 $\min\limits_\chi \left\Vert d-\mathcal{BZ}(\chi)\right\Vert_1<\frac{2}{5}$ for $d_1\neq\frac{3}{4}$.
   \item[(2.2)]  For $\frac{3}{5}\le d_1\le \frac{4}{5}$, $d_2\le\frac{1}{5}$, and $d_1+d_4\le\frac{4}{5}$ we have
                 $\left\Vert b_3^4-d\right\Vert_1=2d_1+2d_4-\frac{6}{5}\le \frac{2}{5}$, where equality is attained iff $d_1+d_4=\frac{4}{5}$.
   \item[(2.3)]  For $\frac{3}{5}\le d_1\le \frac{4}{5}$, $d_2\le\frac{1}{5}$, and $d_1+d_4\ge\frac{4}{5}$ we have $d_4\le\frac{1}{10}$, $d_1\ge\frac{7}{10}$, 
                 and $\left\Vert b_2^4-d\right\Vert_1\le \frac{2}{5}$, where equality is only possible for $d_4=0$ so that~(2.2) yields $d_1=\frac{4}{5}$ and we can
                 apply~(1).
   \item[(3.1)]  For $\frac{1}{2}\le d_1\le \frac{3}{5}$ and $d_2\ge\frac{3}{10}$ we have $d_2\le\frac{1}{2}$ and 
                 $\left\Vert b_4^4-d\right\Vert_1=1-2d_2\le \frac{2}{5}$, where equality
                 is attained iff $d_2=\frac{3}{10}$. For $\frac{1}{2}\le d_1\le \frac{3}{5}$ and $d_2=\frac{3}{10}$ we have $d_3+d_4\le\frac{1}{5}$, $d_4\le\frac{1}{10}$, and
                 $\left\Vert b_3^4-d\right\Vert_1=\frac{1}{5}+2d_4\le\frac{2}{5}$ where equality is attained iff $d_3=d_4=\frac{1}{10}$ and $d_1=\frac{1}{2}$. But in the
                 latter case we have $\left\Vert b_5^4-d\right\Vert_1=0$.
   \item[(3.2)]  For $\frac{1}{2}\le d_1\le \frac{3}{5}$, $\frac{1}{5}\le d_2\le\frac{3}{10}$, and $d_3\le\frac{1}{5}$ we have $d_3+d_4= 1-d_1-d_2\ge \frac{7}{10}-d_1$ so that
                 $d_3\ge\frac{7}{20}-\frac{d_1}{2}$ holds. Thus we have $\left\Vert b_3^4-d\right\Vert_1=\frac{8}{5}-2d_1-2d_3\le \frac{9}{10}-d_1\le\frac{2}{5}$, where
                 equality is attained iff $d_1=\frac{1}{2}$, $d_2=\frac{3}{10}$, $d_3=d_4=\frac{1}{10}$, which equals $b_5^4$.
   \item[(3.3)]  For $\frac{1}{2}\le d_1\le \frac{3}{5}$ and $d_3\ge\frac{1}{5}$ we have $d_2\ge\frac{1}{5}$ and $\left\Vert b_3^4-d\right\Vert_1=\frac{6}{5}-2d_1< \frac{2}{5}$.
   \item[(3.4)]  For $\frac{1}{2}\le d_1\le \frac{3}{5}$ and $d_2\le\frac{1}{5}$ we have $d_3\le\frac{1}{5}$ and $\left\Vert b_3^4-d\right\Vert_1=2d_4 <\frac{2}{5}$ since
                 $d_4\le \frac{1-d_1}{3}\le\frac{1}{6}$.
   \item[(4.1)]  For $d_1\le\frac{1}{2}$ and $d_3+d_4\le\frac{1}{5}$ we have $d_2\le\frac{1}{2}$ and $\left\Vert b_4^4-d\right\Vert_1=2d_3+2d_4\le \frac{2}{5}$, where equality
                 is attained iff $d_3+d_4=\frac{1}{5}$. For the latter case see~(4.7).
   \item[(4.2)]  For $\frac{2}{5}\le d_1\le\frac{1}{2}$ and $d_3\ge\frac{1}{5}$ we have $d_2\ge\frac{1}{5}$ and
                 $\left\Vert b_3^4-d\right\Vert_1=\frac{6}{5}-2d_1\le\frac{2}{5}$, where equality is attained iff $d_1=\frac{2}{5}$. For the latter case see~(5.1)-(5.4).
   \item[(4.3)]  For $d_1\le\frac{1}{2}$, $d_2\le\frac{11}{30}$, and $d_3\le\frac{1}{6}$ we have $d_2\ge\frac{1}{6}$, $d_4\le\frac{1}{6}$, and
                 $\left\Vert b_6^4-d\right\Vert_1=2d_2-\frac{1}{3}\le\frac{2}{5}$, where equality is attained iff $d_2=\frac{11}{30}$. In the latter case we consider
                 $x=\left\Vert b_5^4-d\right\Vert_1$. For $d_4\le d_3\le\frac{1}{10}$ we have $x=\frac{2}{15}<\frac{2}{5}$ and for $d_3\ge d_4\ge\frac{1}{10}$ we have
                 $x=2d_3+2d_4-\frac{4}{15}\le\frac{2}{5}$, where equality is attained iff $d_3+d_4=\frac{1}{3}$, i.~e.{} $d_3=d_4=\frac{1}{6}$, and
                 $\left\Vert b_{11}^4-d\right\Vert_1<\frac{2}{5}$. Finally, for $d_3\ge\frac{1}{10}$ and $d_4\le\frac{1}{10}$ we have $x=2d_3-\frac{1}{15}<\frac{2}{5}$.
   \item[(4.4)]  For $\frac{2}{5}\le d_1\le\frac{1}{2}$ and $d_4\ge\frac{1}{6}$ we have $d_2\ge d_3\ge\frac{1}{6}$ and $\left\Vert b_6^4-d\right\Vert_1=1-2d_1<\frac{2}{5}$.
   \item[(4.5)]  For $\frac{2}{5}\le d_1\le\frac{1}{2}$, $d_2\ge\frac{1}{5}$, $d_3\le\frac{1}{5}$, and $d_3+d_4\ge\frac{1}{5}$  we have $d_4\le\frac{1}{5}$ and
                 $\left\Vert b_8^4-d\right\Vert_1=\frac{4}{5}-2d_3-2d_4\le\frac{2}{5}$, where equality is attained iff $d_3+d_4=\frac{1}{5}$. For the latter case see~(4.7).
   \item[(4.6)]  For $\frac{2}{5}\le d_1\le\frac{1}{2}$ and $d_2\le\frac{1}{5}$ we have $d_4\le d_3\le\frac{1}{5}$ and 
                 $\left\Vert b_8^4-d\right\Vert_1=2d_1-\frac{4}{5}<\frac{2}{5}$.
   \item[(4.7)]  For $\frac{2}{5}\le d_1\le\frac{1}{2}$ and $d_3+d_4=\frac{1}{5}$ we have $d_2\ge\frac{3}{10}$, $d_3\ge\frac{1}{10}$, $d_4\le\frac{1}{10}$, 
                 $d_2+d_3\le 1-d_1\le\frac{3}{5}$ and $\left\Vert b_5^4-d\right\Vert_1=2d_2+2d_3-\frac{4}{5}\le\frac{2}{5}$, where equality is attained
                 iff $d_2+d_3=\frac{3}{5}$, i.~e.{} $d_1=\frac{2}{5}$, $d_2=\frac{2}{5}$, $d_3=\frac{1}{5}$, and $d_4=0$. In the latter case we have
                 $\left\Vert b_{10}^4-d\right\Vert_1<\frac{2}{5}$.
   \item[(5.1)]  For $d_1\le\frac{2}{5}$, $d_2\ge\frac{1}{3}$, and $d_3\le\frac{3}{10}$ we have $d_1\ge\frac{1}{3}$, $d_2\le\frac{2}{5}$, $d_4\le\frac{1}{6}$ and 
                 $\left\Vert b_{9}^4-d\right\Vert_1=\left|d_1-\frac{3}{8}\right|+\left|d_2-\frac{3}{8}\right|+\left|d_3-\frac{1}{8}\right|+\left|d_4-\frac{1}{8}\right|
                 \le\frac{1}{24}+\frac{1}{24}+\frac{7}{40}+\frac{1}{8}<\frac{2}{5}$.
   \item[(5.2)]  For $d_1\le\frac{2}{5}$, $d_2\ge\frac{1}{3}$, and $d_3\ge\frac{3}{10}$ we have $d_1\ge\frac{1}{3}$, $d_3\le\frac{1}{3}$, and 
                 $\left\Vert b_{10}^4-d\right\Vert_1=\frac{2}{3}-2d_3<\frac{2}{5}$.
   \item[(5.3)]  For $d_1\le\frac{2}{5}$, $d_2\le\frac{1}{3}$, and $d_4<\frac{2}{15}$ we have $d_1> \frac{13}{45}$. For $d_1\ge\frac{1}{3}$ we have
                 $\left\Vert b_{10}^4-d\right\Vert_1=2d_1+2d_4-\frac{2}{3}<\frac{2}{5}$ and for $d_1<\frac{1}{3}$ we have $\left\Vert b_{10}^4-d\right\Vert_1=2d_4<\frac{2}{5}$.
   \item[(5.4)]  For $\frac{7}{24}\le d_1\le\frac{2}{5}$, $d_2\le\frac{1}{3}$, and $d_4\ge\frac{2}{15}$ we have $d_2\ge\frac{1}{5}$ and consider
                 $x=\left\Vert b_{11}^4-d\right\Vert_1$. If $d_3\ge d_4\ge\frac{1}{6}$ then we have $x=\frac{2}{3}-2d_2<\frac{2}{5}$ for $d_1\ge\frac{1}{3}$ and
                 $x=\frac{4}{3}-2d_1-2d_2<\frac{2}{5}$. If $d_4\le d_3\le\frac{1}{6}$ then we have $x=2d_1-\frac{2}{3}<\frac{2}{5}$ for $d_1\ge\frac{1}{3}$
                 and $x=0$ for $d_1\le\frac{1}{3}$. If $d_3\ge\frac{1}{6}$ and $d_4\le\frac{1}{6}$ then we have $x=2d_1+2d_3-1<\frac{2}{5}$ for $d_1\ge\frac{1}{3}$
                 and $x=2d_3-\frac{1}{3}$ for $d_1\le\frac{1}{3}$.   
   \item[(6)]    For $\frac{1}{4}\le d_1\le\frac{7}{24}$ we have $d_3\le d_2\le\frac{7}{24}$, $d_4=1-d_1-d_2-d_3\ge 1\frac{1}{8}$ and we consider
                 $x=\left\Vert b_{12}^4-d\right\Vert_1$. For $d_4\le d_3\le d_2\le\frac{1}{4}$ we have $x=2d_1-\frac{1}{2}<\frac{2}{5}$, for $d_2\ge\frac{1}{4}$
                 and $d_4\le d_3\le\frac{1}{4}$ we have $x=2d_1+2d_2-1<\frac{2}{5}$, and for $d_2\ge d_3\ge\frac{1}{4}$ and $d_4\le\frac{1}{4}$ we have
                 $x=\frac{1}{2}-2d_4<\frac{2}{5}$.
  \end{itemize}
  Finally we have to show that $\min\limits_\chi \left\Vert d-\mathcal{BZ}(\chi)\right\Vert_1=\frac{2}{5}$ for the stated regions. For $d_1=0.75$, $d_2=0.25$ we have done this
  already by solving an ILP. Now let $d_1=\frac{4}{5}$ and $d_2=\frac{1}{5}$. We can easily check $\left\Vert d-\mathcal{BZ}\left(b_i^4\right)\right\Vert_1>=\frac{2}{5}$ for all
  $1\le i\le 12$ and $\left\Vert d-\mathcal{BZ}\left(b_1^4\right)\right\Vert_1=\frac{2}{5}$.
\end{proof}

We would like to remark that the weighted voting game $\chi=[5;4,1,1,1,1]$ has $\left(\frac{15}{19},\frac{1}{19},\frac{1}{19},\frac{1}{19},\frac{1}{19}\right)$ as its Banzhaf vector and fulfills $\left\Vert (0.8,0.2,0,0,0)-\mathcal{BZ}(\chi)\right\Vert_1<\frac{1}{3}$. So the desired power distribution $(0.8,0.2,0,0,0)$ is not an example for worst possible approximation property. So maybe for $n\ge 5$ the desired power distribution $(0.75,0.25,0,0,0)$ is the unique example with worst possible approximation property for the Banzhaf power index. We remark $\left\Vert (0.75,0.25,0,0,0)-\mathcal{BZ}(\chi)\right\Vert_1=\frac{30}{76}\approx 0.3947368$.

As far as the Shapley Shubik power index is concerned we have no idea if $(0.75,0.25,0,\dots)$ is the unique example with worst possible approximation property for large $n$ or if there are regions which are hard to approximate. In principle we can utilize our ILP formulation from Section~\ref{section_ilp} to check whether a whole region like $d_1=\frac{5}{6}$, $\frac{1}{12}\le d_2\le\frac{1}{6}$ can not be approximated with an error less than $\frac{1}{3}$ by letting the $d_i$ variables with certain range bounds instead of numerical constants. As an example we have tried this for $n=6$ and obtained the weighted voting game $\chi=[5;5,1,1,1,1,1]$ approximating the desired power distribution $d=\left(\frac{5}{6},\frac{1}{12},\frac{1}{12},0,0,0\right)$ with an error of $\left\Vert d-\mathcal{SS}(\chi)\right\Vert_1=\frac{1}{5}<\frac{1}{3}$.

\end{document}